\newtheorem{Satz}{Satz}
\newtheorem{corollary}[Satz]{Corollary}
\newtheorem{theorem}[Satz]{Theorem}
\newtheorem*{lemma*}{Lemma}
\theoremstyle{definition}
\newcommand{\C}{\mathbb{C}}
\newcommand{\R}{\mathbb{R}}
\newcommand{\N}{\mathbb{N}}
\newcommand{\T}{\mathbb{T}}
\newcommand{\cK}{\mathcal{K}}
\newcommand{\cB}{\mathcal{B}}
\newcommand{\cH}{\mathcal{H}}
\newcommand{\cP}{\mathcal{P}}
\newcommand{\cF}{\mathcal{F}}
\newcommand{\conv}{\mathrm{conv}}
\newcommand{\interior}{\mathrm{int \, }}
\newcommand{\relint}{\mathrm{relint \, }}
\newcommand{\intd}{\mathrm{d}}
\newcommand{\1}{\mathds{1}}
\DeclareMathOperator{\SO}{SO}
\DeclareMathOperator{\GOp}{G}
\DeclareMathOperator{\AOp}{A}
\newcommand{\Sn}{\mathbb S^{n-1}}
\newcommand{\CM}[1]
  {
    \phi_{#1}
  }
\newcommand{\TenCM}[4]
  {
    \phi_{#1}^{#2,#3,#4}
  }
\newcommand{\LokMinTen}[4]{{\phi_{#1}^{#2,#3,#4}}}
\begin{document}

  \title[Crofton formulae for tensorial curvature measures: the general case]
    {Crofton formulae for tensorial curvature measures: \\ the general case}
  \date{March 21, 2017}

  \author{Daniel Hug \and Jan A. Weis}
  \address{Karlsruhe Institute of Technology (KIT), Department of Mathematics, D-76128 Karls\-ruhe, Germany}
  \email{daniel.hug@kit.edu}
  \address{Karlsruhe Institute of Technology (KIT), Department of Mathematics, D-76128 Karls\-ruhe, Germany}
  \email{jan.weis@kit.edu}

  \thanks{The authors were supported in part by DFG grants FOR 1548 and HU 1874/4-2}
  \subjclass[2010]{Primary: 52A20, 53C65; secondary: 52A22, 52A38, 28A75}
  \keywords{Crofton formula, tensor valuation, curvature measure, Minkowski tensor, integral geometry, convex body, polytope}

  \begin{abstract}
    The tensorial curvature measures are tensor-valued
    generalizations of the curvature measures of convex
    bodies. On  convex polytopes, there exist further generalizations
    some of which also have continuous extensions to arbitrary convex bodies.
    In a previous work, we obtained kinematic
    formulae for all (generalized) tensorial curvature measures.
    As a consequence of these results, we now derive a
	complete system of Crofton formulae for such
    (generalized) tensorial curvature measures.
    These formulae express the integral mean of the (generalized)
    tensorial curvature measures of the intersection of a
    given convex body (resp. polytope, or finite unions thereof) with a uniform
    affine $k$-flat in terms of linear combinations of (generalized)
    tensorial curvature measures of the given convex body
    (resp. polytope, or finite unions thereof).
    The considered generalized tensorial curvature measures
	generalize those studied formerly in the context of
	Crofton-type formulae, and the coefficients involved
	in these results are substantially less technical
	and structurally more transparent
    than in previous works.
    Finally, we prove that essentially all generalized
    tensorial curvature measures on convex polytopes are linearly
    independent. In particular, this implies that the
    Crofton formulae which we prove in this contribution
    cannot be simplified further.
  \end{abstract}

  \maketitle

\section{Introduction}
\label{sec:1}

    The \emph{classical Crofton formula} is a major
    result in integral geometry.
    It expresses the integral mean of the intrinsic
    volume of a convex body intersected with a uniform
    affine subspace of the underlying Euclidean space in
    terms of another intrinsic volume of this convex
    body.
    More precisely, for a \textit{convex body}
    $K \in \cK^{n}$ (a nonempty, compact, convex set) in
    the $n$-dimensional Euclidean space
    $\R^{n}$, $n \in \N$, the classical Crofton formula
    (see \cite[(4.59)]{Schneider14}) states that
    \begin{equation} \label{Form_Croft_Classic}
      \int_{A(n, k)} V_{j} (K \cap E) \, \mu_{k} (\intd E)
      = \alpha_{n j k} V_{n - k + j} (K),
    \end{equation}
    for $k \in \{ 0, \ldots, n \}$ and
    $j \in \{ 0, \ldots, k \}$, where $A(n, k)$ is the
    affine Grassmannian of $k$-flats in $\R^{n}$, on which
    $\mu_{k}$ denotes the motion invariant Haar measure,
    normalized as in \cite[p.~588]{SchnWeil08}, and
    $$\alpha_{n j k} = \frac {\Gamma(\frac {n - k + j + 1} 2)
		\Gamma(\frac {k + 1} 2)} {\Gamma(\frac {n + 1} 2) \Gamma(\frac {j + 1} 2)}
		$$ is expressed in terms of specific values of the Gamma function $\Gamma(\cdot)$
    (see \cite[Theorem 4.4.2]{Schneider14}).

    The functionals $V_{i} :\cK^n\rightarrow \R$, for
    $i\in\{0,\ldots,n\}$, appearing in
    \eqref{Form_Croft_Classic}, are the
    \textit{intrinsic volumes}, which occur as the
    coefficients of the monomials in the
    \textit{Steiner formula}
    \begin{equation} \label{Form_Steiner}
      \mathcal{H}^n (K + \epsilon B^{n})
      = \sum_{j = 0}^{n} \kappa_{n - j} V_{j} (K) \epsilon^{n - j},
    \end{equation}
    which holds for all convex bodies $K \in \cK^{n}$ and
    $\epsilon \geq 0$.
    Here, $\mathcal{H}^n$ is the $n$-dimensional Hausdorff
    measure (Lebesgue measure, volume), $+$ denotes the
    Minkowski addition in $\R^{n}$, and $\kappa_{n}$ is
    the volume of the Euclidean unit ball $B^{n}$  in
    $\R^{n}$.
    Properties of the intrinsic volume $V_i$ such as
    continuity, isometry invariance, homogeneity, and
    additivity (valuation property) are derived from
    corresponding properties of the volume functional.
    A key result for the intrinsic volumes is
    \emph{Hadwiger's characterization theorem} (see
    \cite[2. Satz]{Hadwiger52}), which states that
    $V_{0}, \ldots, V_{n}$ form a basis of the vector
    space of continuous and isometry invariant real-valued
    valuations on $\cK^{n}$.
    One of its numerous applications is a concise proof of
    \eqref{Form_Croft_Classic}.

    A natural way to extend the classical Crofton formula
    is to apply the integration over the affine
    Grassmannian $A(n, k)$ to functionals which generalize
    the intrinsic volumes.
    One of these generalizations concerns tensor-valued
    valuations on $\cK^{n}$.
    Their systematic investigation started with a
    characterization theorem, similar to the
    aforementioned result due to Hadwiger.
    Integral geometric formulae, including a
    Crofton formula, for \textit{quermassvectors}
    (vector-valued generalizations of the intrinsic volumes)
    have already been  found by Hadwiger \& Schneider and
    Schneider, in 1971/72 (see
    \cite{HadSchn71, Schn72, Schn72b}).
    More recently in 1997, McMullen generalized these
    vector-valued valuations even further, and introduced
    tensor-valued generalizations of the intrinsic volumes
    (see \cite{McMullen97}).
    Only two years later Alesker generalized Hadwiger's
    characterization theorem (see
    \cite[Theorem 2.2]{Alesker99b}) by showing that the
    vector space of continuous and isometry covariant
    tensor-valued valuations on $\cK^{n}$ is spanned by the
    tensor-valued versions of
    the intrinsic volumes, the \emph{Minkowski tensors},
    multiplied with suitable powers of the metric tensor
    in~$\R^{n}$.
    However, these valuations are not linearly independent,
    as shown by McMullen (see \cite{McMullen97}) and further
    investigated by Hug, Schneider \& Schuster (see
    \cite{HSS07a}).
    This is one reason why an approach to explicit integral
    geometric formulae via characterization theorems does
    not seem to be technically feasible.
    Nevertheless, great progress in the integral geometry
		of tensor-valued valuations has been
    made by different methods.
    In 2008, Hug, Schneider \& Schuster proved a set of
    Crofton formulae for the Minkowski tensors (see
    \cite[Theorem 2.1--2.6]{HugSchnSchu08}).
    A totally different algebraic approach has been
    developed by Bernig \& Hug to obtain    various
    integral geometric formulae for the translation
    invariant Minkowski tensors (see \cite{BernHug15}).

    On the other hand, localizations of the intrinsic volumes
    yield other types of generalizations.
    The \emph{support measures} are weakly continuous,
    locally defined and motion equivariant valuations on
    convex bodies with values in the space of finite measures
    on Borel subsets of    $\R^{n} \times \Sn$, where $\Sn$
    denotes the Euclidean unit sphere in $\R^{n}$.
    They are determined by a local version of
    \eqref{Form_Steiner} and form a crucial example of
    localizations of the intrinsic volumes, which are simply
    the total support measures.
    Furthermore, their marginal measures on Borel subsets of
    $\R^{n}$ are called \emph{curvature measures}, and the
    ones on Borel subsets of $\Sn$ are called \emph{area measures}.
    For the area measures and the curvature measures, Schneider found
    characterization theorems (see
    \cite{Schneider75, Schneider78}) similar to the one due
    to Hadwiger in the global case.
    It took some time until in 1995, Glasauer proved a
    characterization theorem for the support measures, even
    without the need of requesting the valuation property
    (see \cite[Satz 4.2.1]{Glasauer95}).
    As to integral geometry, in 1959 Federer \cite{Federer59}
    proved Crofton formulae for    curvature measures, even
    in the more general setting of sets with positive reach.
    Certain Crofton formulae for support measures were proved
    by Glasauer in 1997 (see \cite[Theorem~3.2]{Glasauer97}).
    However, his results require a special set operation on
    support elements of the involved convex bodies and affine
    subspaces.

    Interestingly, the combination of Minkowski tensors and
    localization leads to a better understanding of integral
    geometric formulae.
    In recent years, Schneider introduced \emph{local tensor
    valuations} (see \cite{Schneider13}), which were then
    further studied by Hug \& Schneider (see
    \cite{HugSchn14, HugSchn16a, HugSchn16b}).
    They introduced particular tensor-valued support measures,
    the \emph{local Minkowski tensors} on convex bodies
    (and generalizations on polytopes), which (as their
    name suggests) can also be seen as localizations of the
    Minkowski tensors.
    They proved several different characterization results for
    these (generalized) local Minkowski tensors in the just
    mentioned works.
    This led us to consider their marginal measures on Borel
    subsets of $\R^{n}$, the \emph{tensorial curvature measures}
    and their generalizations on convex polytopes.
    Preceding this work, the present authors derived a set of
    Crofton formulae for a different version of these tensorial
    curvature measures, defined with respect to the (random)
    intersecting affine subspace, and as a consequence of these
    results also obtained Crofton formulae for some of the
    (original) tensorial curvature measures (see
    \cite{HugWeis16a}).
    As a far reaching generalization of previous results, a
    complete set of kinematic formulae for the (generalized)
    tensorial curvature measures has been proved in
    \cite{HugWeis16b}.

    In statistical physics, the intrinsic volumes (in the physical
		context better known as \textit{Minkowski functionals}) are an
		important tool for the characterization of geometric properties of spatial
		patterns (see for example the survey \cite{Mecke00}).
    However, due to their translation and rotation invariance, they are not useful
		when it comes to the quantification of orientation or anisotropy of spatial structures.
    For the determination of these kinds of geometric features,
		rotation covariant valuations, such as the Minkowski tensors and the (generalized)
		tensorial curvature measures considered here, are much more suitable, and
		therefore they have been
		heavily used recently (see for example \cite{Schroeder11}). For further
		applications we refer to the introduction of our preceding work~\cite{HugWeis16b}.

    The aim of the present work is to prove a complete set of
    Crofton formulae for the (generalized) tensorial curvature
    measures.
    This complements the particular results for (extrinsic)
    tensorial curvature measures and Minkowski tensors obtained
    in \cite{HugWeis16a} and \cite{SvaneJensen16}.
    The current approach is basically an application of the
    kinematic formulae for (generalized) tensorial curvature
    measures derived in \cite{HugWeis16b}.
    The connection between local kinematic and local Crofton
    formulae is well known for the scalar curvature measures.
    In that case it is used to determine the coefficients in
    the kinematic formulae.
    The basic strategy there is as follows.
    First, the kinematic formulae are proved, but the involved
    coefficients remain undetermined, since the required direct
    calculation seemed to be infeasible.
    Then Crofton formulae are derived which involve the same
    constants.
    In the latter, the determination of the coefficients turns
    out to be an easy task, which is accomplished by evaluating
    the result for balls of different radii.
    In the tensorial framework, this  approach breaks down,
    since the explicit calculation of integral mean values of (generalized)
    tensorial curvature measures for sufficiently many examples
    (template method) does not seem to be possible.
    Instead, the required coefficients were determined by a
    direct derivation of the kinematic formulae for (generalized) tensorial
    curvature measures in \cite{HugWeis16b}, and from this we now
    can derive explicit Crofton formulae, otherwise following
    the  reasoning described above.

    Since the tensorial curvature measures are local versions of the Minkowski tensors, it is rather straightforward to derive Crofton formulae, similar to the ones proved in the present contribution, and apparently also kinematic formulae, similar to the ones obtained in the preceding work \cite{HugWeis16b}, for Minkowski tensors as well.
    This is the subject of the subsequent work \cite{HugWeis17}.
    There we extend some of the integral formulae for translation invariant Minkowski tensors obtained by Bernig \& Hug in \cite{BernHug15} and significantly simplify the coefficients of the Crofton formulae proven in \cite{HugSchnSchu08}.
    We further refer to \cite{HugWeis16b} for a thorough
    discussion of related work.

    The present contribution is structured as follows. In Section
    \ref{sec:2}, we fix our notation and collect various
    auxiliary results which will be needed.
    Section \ref{sec:3} contains the main results.
    First, we state the Crofton formulae for the
    generalized tensorial curvature measures on the space
    $\cP^{n}$ of convex polytopes  in $\R^n$.
    Then we  provide the formulae for all the (generalized) tensorial
    curvature measures, for which a continuous
    extension to $\cK^{n}$ exists.
    Finally, we highlight some special cases.
    In Section~\ref{sec:4}, we first recall the kinematic
    formulae for generalized tensorial curvature measures from \cite{HugWeis16b},
		in order to apply these in the proofs of the main
    results and the corollaries. In the final Section \ref{sec:5}
    we show that the generalized tensorial curvature measures on convex polytopes are essentially
    all linearly independent.

\section{Preliminaries}
\label{sec:2}

    We work in the $n$-dimensional Euclidean space
    $\R^{n}$, equipped with its usual topology generated by the standard
    scalar product $\langle \cdot\,, \cdot \rangle$ and the corresponding Euclidean norm
    $\| \cdot \|$. For a topological space $X$, we denote the Borel
    $\sigma$-algebra on $X$ by $\cB(X)$.

    We denote the proper (orientation preserving)
		rotation group on $\R^{n}$ by $\SO(n)$, and we write $\nu$ for the
    Haar probability measure on $\SO(n)$. By $\GOp(n, k)$ (resp.~$\AOp(n, k)$),
    for $k \in \{0, \ldots, n\}$, we denote the
    Grassmannian of $k$-dimensional linear (resp.~affine) subspaces of $\R^{n}$.
		These sets carry natural topologies, see \cite[Chap.~13.2]{SchnWeil08}.
		
		We
    write $\mu_k$  for the rotation  invariant Haar  measure
    on $\AOp(n, k)$, normalized as in \cite[(13.2)]{SchnWeil08}, that is, for a fixed (but arbitrary) linear subspace $E_k \in \GOp(n, k)$,
		\begin{equation}\label{decomp}
		\mu_k(\cdot)=\int_{\SO(n)}\int_{E_k^\perp}\1\{\rho(E_k+t)\in\cdot\}\,
		\mathcal{H}^{n - k}(\intd t)\, \nu(\intd \rho),
		\end{equation}
		where $\cH^{j}$ denotes the $j$-dimensional Hausdorff measure for $j \in \{0, \ldots, n\}$.
		The directional space of an affine $k$-flat $E \in
    \AOp(n, k)$ is denoted by $E^{0} \in \GOp(n, k)$ and its orthogonal complement
    by $E^{\perp} \in \GOp(n, n - k)$.
    The orthogonal projection of a vector $x \in
    \R^{n}$ to a linear subspace $L$ of $\R^{n}$ is denoted by
    $p_{L}(x)$.

    The vector space of symmetric tensors of rank $p \in \N_{0}$ over
    $\R^n$ is denoted by~$\T^{p}$, and the corresponding algebra of symmetric
    tensors over $\R^{n}$ by $\T$. The symmetric tensor product of two
    tensors $T, U \in \T$ is denoted by $TU$, and for $q \in \N_{0}$ and a tensor
    $T \in \T$ we write $T^{q}$ for the $q$-fold tensor product; see also the
    introductory chapters of \cite{KidVed16} for further details and references.
    Identifying $\R^{n}$
    with its dual space via its scalar product, we interpret a symmetric
    tensor of rank $p$ as a symmetric $p$-linear map from
    $(\R^{n})^{p}$ to $\R$. One special tensor is the \emph{metric
    tensor} $Q \in \T^{2}$, defined by $Q(x, y) := \langle x, y \rangle$ for $x,
    y \in \R^{n}$. For an affine $k$-flat $E \in \AOp(n, k)$, $k \in \{0,
    \ldots, n\}$, the metric tensor $Q(E)$ associated with $E$ is defined by $Q(E)(x,
    y) := \langle p_{E^{0}} (x), p_{E^{0}} (y) \rangle$ for $x, y \in \R^{n}$.

    The definition of the (generalized) tensorial curvature measures, which we
		recall in the following, is partly motivated
    by their relation to the
    support measures. Therefore, we first recall the latter.
    For a convex body $K \in \cK^{n}$ and
    $x \in \R^{n}$, we denote the
    metric projection of $x$ onto $K$ by $p(K, x)$, and for $x \in \R^{n} \setminus
    K$ we define $u(K, x)
    := (x - p(K, x)) / \| x - p(K, x) \|$. For $\epsilon >
    0$ and a Borel set $\eta \subset \Sigma^{n}:=\R^{n} \times
    \Sn$,
    \begin{equation*}
      M_{\epsilon}(K, \eta) := \left\{ x \in \left( K + \epsilon B^{n} \right)
      \setminus K \colon \left( p(K, x), u(K, x) \right) \in \eta \right\}
    \end{equation*}
    is a local parallel set of $K$ which satisfies the \emph{local Steiner
      formula}
    \begin{equation} \label{Form_Steiner_loc}
      \mathcal{H}^n(M_{\epsilon}(K, \eta))
      = \sum_{j = 0}^{n - 1} \kappa_{n - j} \Lambda_{j} (K, \eta)
      \epsilon^{n - j}, \qquad \epsilon \geq 0.
    \end{equation}
    This relation determines the \emph{support measures} $\Lambda_{0}
    (K, \cdot), \ldots, \Lambda_{n - 1} (K, \cdot)$ of $K$, which are
    finite, nonnegative Borel measures on $\cB (\Sigma^{n})$. Obviously, a comparison
    of \eqref{Form_Steiner_loc} and the Steiner formula \eqref{Form_Steiner} yields
    $V_{j}(K) = \Lambda_{j} (K, \Sigma^{n})$. Further information
    on these measures and functionals can be found in \cite[Chap.~4.2]{Schneider14}.

    Let $\cP^n\subset\cK^n$ denote the space of convex polytopes in $\R^n$.
    For a polytope $P \in \cP^{n}$ and $j \in \{ 0, \ldots, n \}$, we
    denote the set of $j$-dimensional faces of $P$ by $\cF_{j}(P)$ and
    the normal cone of $P$ at a face $F \in \cF_{j}(P)$ by $N(P,F)$.
    Then, the $j$th support measure $\Lambda_{j} (P, \cdot)$ of $P$ is explicitly given by
    \begin{equation*}
      \Lambda_{j} (P, \eta) = \frac {1} {\omega_{n - j}}
      \sum_{F \in \cF_{j}(P)} \int _{F} \int _{N(P,F) \cap \Sn}
      \1_\eta(x,u)
      \, \cH^{n - j - 1} (\intd u) \, \cH^{j} (\intd x)
    \end{equation*}
    for $\eta \in \cB(\Sigma^{n})$ and $j \in \{ 0, \ldots, n - 1 \}$,
    where $\omega_{n}$ is the $(n - 1)$-dimensional volume (Hausdorff measure) of $\Sn$.

    For a polytope $P\in\mathcal{P}^n$, we define the
    \emph{generalized tensorial curvature measure}
    \begin{align*}
      \TenCM{j}{r}{s}{l} (P, \cdot) ,\qquad j \in\{0, \ldots, n - 1\}, \, r, s, l \in \N_{0},
    \end{align*}
    as the Borel measure on $\mathcal{B}(\R^n)$ which is given by
    \begin{equation*}
      \TenCM{j}{r}{s}{l} (P, \beta) :=  c_{n, j}^{r, s, l}\,\frac{1}{\omega_{n - j}}
			\sum_{F \in \cF_{j}(P)} Q(F)^{l} \int _{F \cap \beta} x^r \, \cH^{j}(\intd x)
			\int _{N(P,F) \cap \Sn} u^{s} \, \cH^{n - j - 1} (\intd u),
    \end{equation*}
    for $\beta \in \cB(\R^{n})$, where
    \begin{align*}
      c_{n, j}^{r, s, l} := \frac{1}{r! s!} \frac{\omega_{n - j}}{\omega_{n - j + s}} \frac{\omega_{j + 2l}}{\omega_{j}}  \text{ if }j\neq 0, \ \text{ $c_{n, 0}^{r, s, 0} := \frac{1}{r! s!}
      \frac{\omega_{n}}{\omega_{n + s}}$},\ \text{ and $c_{n, 0}^{r, s, l} :=1$ for $l\ge 1$.}
    \end{align*}
    Note that if $j = 0$ and $l \ge 1$, then we have $\TenCM{0}{r}{s}{l} \equiv 0$.
    In all other cases the factor $1/\omega_{n-j}$ in the definition of $\TenCM{j}{r}{s}{l} (P, \beta)$ and the factor $\omega_{n-j}$ involved in the constant $c_{n, j}^{r, s, l}$ cancel.
		
    For a general convex body $K\in\mathcal{K}^n$, we define the \emph{tensorial curvature measure}
    \begin{align*}
      \TenCM{n}{r}{0}{l} (K, \cdot),\qquad r,l\in\N_0,
    \end{align*}
    as the Borel measure on $\mathcal{B}(\R^n)$ which is given by
    \begin{equation*}
      \TenCM{n}{r}{0}{l} (K, \beta): = c_{n, n}^{r, 0, l} \, Q^{l} \int _{K \cap \beta} x^r \, \cH^{n}(\intd x),
    \end{equation*}
    for $\beta \in \cB(\R^{n})$,
    where  $c_{n, n}^{r, 0, l} := \frac{1}{r!} \frac{\omega_{n + 2l}}{\omega_{n}}$.
    For the sake of convenience, we extend these definitions by $\TenCM{j}{r}{s}{0} := 0$
    for $j \notin \{ 0, \ldots, n \}$ or $r \notin \N_{0}$ or $s \notin \N_{0}$ or $j = n$ and $s \neq 0$. Finally, we
    observe that for $P\in\mathcal{P}^n$, $r=s=l=0$, and $j=0,\ldots,n-1$, the scalar-valued measures
    $\TenCM{j}{0}{0}{0}(P,\cdot)$ are just the curvature measures $\CM{j}(P,\cdot)$, that is,
    the marginal measures on $\R^n$ of the support measures $\Lambda_j(P,\cdot)$, which therefore can be extended from
    polytopes to general convex bodies, and $\TenCM{n}{0}{0}{0}(K,\cdot)$ is the restriction
    of the $n$-dimensional Hausdorff measure to $K\in\mathcal{K}^n$.

    We emphasize that in the present work,
    $\TenCM{j}{r}{s}{l} (P, \cdot)$ and
    $\TenCM{n}{r}{0}{l} (K, \cdot)$ are Borel measures on $\R^n$
    and not on $\R^n\times\mathbb{S}^{n-1}$, as in
    \cite{HugSchn14}, and also the normalization is slightly
    adjusted as compared to \cite{HugSchn14}
    (where the normalization was not a relevant issue).
    However, we stick to the definition and normalization of our preceding work  \cite{HugWeis16b},
    where the connection to the generalized local Minkowski tensors $\tilde{\phi}^{r,s,l}_j$
    is described and where also the properties and available
		characterization results for these measures are discussed in more detail.

    It has been shown in \cite{HugSchn14} that the generalized local Minkowski tensor $\tilde{\phi}^{r,s,l}_j$ has a continuous extension
    to $\cK^{n}$ which preserves all other properties if and only if $l \in \{0,1\}$;
    see \cite[Theorem 2.3]{HugSchn14} for a  stronger characterization result.
    Globalizing any such continuous extension in the $\Sn$-coordinate,
    we obtain a continuous extension for the generalized tensorial curvature measures.
    These can be represented with suitable differential forms which are defined on the sphere bundle
		of $\R^n$ and integrated (that is, evaluated) on the normal cycle, which is the reason why
		they are called \textit{smooth} (for more details, see for example
		\cite{Saienko16}, \cite{HugSchn14}, \cite{KidVed16} and
		the literature cited there).
    For $l = 0$,  this extension can be easily expressed
    via the support measures. We call the measures thus obtained the
    \emph{tensorial curvature measures}. For a convex body $K \in \cK^{n}$, a Borel
    set $\beta \in \cB(\R^{n})$, and $r, s \in \N_{0}$, they are given by
    \begin{equation}\label{alsomeas}
      \TenCM{j}{r}{s}{0} (K, \beta):= c_{n, j}^{r, s, 0} \int _{\beta \times \Sn} x^r u^s \, \Lambda_j(K, \intd (x, u)),
    \end{equation}
    for $j\in\{0,\ldots,n-1\}$, whereas $ \TenCM{n}{r}{0}{l} (K, \beta)$ has
    already been defined for all $K \in \cK^{n}$.

    The valuations $Q^{m} \TenCM{j}{r}{s}{l}$ on $\cP^{n}$ are linearly independent, where $ m, j, r, s, l \in \N_{0}$,
    $j \in \{ 0, \ldots, n \}$ with $l = 0$, if $j \in \{ 0, n - 1 \}$, and $s = l = 0$, if $j = n$.
    In our preceding work \cite{HugWeis16b}, we pointed out that the corresponding proof is similar to the proof of \cite[Theorem~3.1]{HugSchn14}.
    A detailed argument is now provided in Section \ref{sec:5}.

    The statements of our results involve  the classical \emph{Gamma function}.
    For all complex numbers $z \in \C \setminus \{ 0, -1, \ldots \}$ (see \cite[(2.7)]{Artin64}), it can be defined via the Gaussian product formula
    \begin{align*}
      \Gamma(z) := \lim_{a \rightarrow \infty} \frac{a^{z} a!}{z (z + 1) \cdots (z + a)}.
    \end{align*}
    This definition implies, for $c \in \R\setminus \mathbb{Z}$ and $m\in \N_{0}$, that
	\begin{align} \label{Form_Gam_Cont}
      \frac{\Gamma(-c + m)}{\Gamma(-c)} = (-1)^{m} \frac{\Gamma(c + 1)}{\Gamma(c - m + 1)}.
    \end{align}
    The Gamma function has simple poles at the nonpositive integers.
    The expression on the right  side of relation \eqref{Form_Gam_Cont} provides a continuation of the expresion on the left side at  $c \in \N_{0}$, with
		the understanding that $\Gamma(c - m + 1)^{-1} = 0$ for $c < m$.
    This continuation will be applied several times (sometimes without mentioning it specifically) in the coefficients of the upcoming formulae and the corresponding proofs.
    Mostly it is used in ``boundary cases'' in which quotients are involved, such as in \eqref{Form_Gam_Cont} with $c = 0$ and $m \in \N_{0}$, which have to be interpreted as ${\Gamma(0 + m)}/{\Gamma(0)} = \1\{ m = 0 \}$.

\section{The Crofton formulae}
\label{sec:3}

    In this work, we establish a complete set of Crofton formulae for the generalized tensorial curvature measures of
		convex polytopes. That is, for $P \in \cP^n$ and $ \beta \in \cB(\R^n)$, we explicitly express integrals of the form
    \begin{align*}
      \int_{\AOp(n, k)} \TenCM{j}{r}{s}{l} (P \cap E, \beta \cap E) \, \mu_k( \intd E)
    \end{align*}
    in terms of generalized tensorial curvature measures of $P$, evaluated at $\beta$. Furthermore, for $l = 0, 1$, the corresponding (generalized) tensorial curvature measures are defined on $\cK^{n} \times \cB(\R^{n})$, and therefore we also consider the Crofton integrals
    \begin{align*}
      \int_{\AOp(n, k)} \TenCM{j}{r}{s}{l} (K \cap E, \beta \cap E) \, \mu_k( \intd E)
    \end{align*}
    for $K \in \cK^n$, $\beta \in \cB(\R^n)$, $l = 0, 1$.

    All results which are stated in the following, extend by additivity to finite unions of polytopes or convex bodies.

\subsection{Generalized tensorial curvature measures on polytopes}

   First, we separately state a formula for $j = k$.

    \begin{theorem} \label{Thm_CF_j=k}
      Let $P \in \cP^n$, $\beta \in \cB(\R^n)$, and $k, r, s, l \in \N_{0}$ with $k \leq n$. Then,
      \begin{align*}
        \int_{\AOp(n, k)} \TenCM{k}{r}{s}{l} (P \cap E, \beta \cap E) \,
				\mu_k( \intd E) = \1 \{ s \text{ \rm even} \} \frac{1}{(2\pi)^{s}
				\left(\frac{s}{2}\right)!} \frac{\Gamma(\frac{n - k + s}{2})}{
				\Gamma(\frac{n - k}{2})} \, \TenCM{n}{r}{0}{\frac s 2 + l}(P, \beta).
      \end{align*}
    \end{theorem}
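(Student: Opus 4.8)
The plan is to use that for $j=k$ the slice $P\cap E$ contributes only through its top-dimensional face, so the integrand can be written explicitly and the integration over $\AOp(n,k)$ factors into a spatial part and a rotational part.

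\textbf{Step 1 (the integrand).} For $\mu_k$-almost every $E\in\AOp(n,k)$ the polytope $P\cap E$ has $P\cap E$ as its only $k$-dimensional face, with associated metric tensor $Q(E)=Q(E^{0})$ and normal cone (as a linear subspace) equal to $E^{\perp}$. Directly from the definition of $\TenCM{k}{r}{s}{l}$,
\[
\TenCM{k}{r}{s}{l}(P\cap E,\beta\cap E)=\frac{c_{n,k}^{r,s,l}}{\omega_{n-k}}\,Q(E)^{l}\left(\int_{P\cap\beta\cap E}x^{r}\,\cH^{k}(\intd x)\right)\left(\int_{E^{\perp}\cap\Sn}u^{s}\,\cH^{n-k-1}(\intd u)\right),
\]
and this identity persists (both sides vanishing) when $\dim(P\cap E)<k$, since the inner $\cH^{k}$-integral is then zero; hence it may be integrated against $\mu_k$ over all of $\AOp(n,k)$.

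\textbf{Step 2 (spatial integration via \eqref{decomp}).} Inserting this into $\int_{\AOp(n,k)}\cdots\,\mu_k(\intd E)$ and applying the decomposition \eqref{decomp}, I note that for fixed $\rho\in\SO(n)$ the directional space $E^{0}=\rho E_k$, and hence $Q(E)^{l}$ and the spherical factor, do not depend on the translation $t$ and can be pulled out. The remaining translation integral is handled by Fubini's theorem: writing $L=\rho E_k$ and substituting $v=\rho t\in L^{\perp}$,
\[
\begin{aligned}
\int_{E_k^{\perp}}\int_{P\cap\beta\cap\rho(E_k+t)}x^{r}\,\cH^{k}(\intd x)\,\cH^{n-k}(\intd t)
&=\int_{L^{\perp}}\int_{P\cap\beta\cap(L+v)}x^{r}\,\cH^{k}(\intd x)\,\cH^{n-k}(\intd v)\\
&=\int_{P\cap\beta}x^{r}\,\cH^{n}(\intd x),
\end{aligned}
\]
which is independent of $\rho$. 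This reduces the left-hand side to $\frac{c_{n,k}^{r,s,l}}{\omega_{n-k}}\bigl(\int_{P\cap\beta}x^{r}\,\cH^{n}(\intd x)\bigr)$ times the rotational integral $\int_{\SO(n)}Q(\rho E_k)^{l}\bigl(\int_{(\rho E_k)^{\perp}\cap\Sn}u^{s}\,\cH^{n-k-1}(\intd u)\bigr)\,\nu(\intd\rho)$.

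\textbf{Step 3 (the rotational integral, the crux).} Two ingredients remain. First, by the symmetry $u\mapsto-u$ the spherical moment vanishes for odd $s$ (producing $\1\{s\text{ even}\}$), while for $s=2m$ it is the unique $\SO(E^{\perp})$-invariant symmetric $2m$-tensor, namely $\omega_{n-k}\frac{\Gamma(m+\frac12)\Gamma(\frac{n-k}{2})}{\Gamma(\frac12)\Gamma(m+\frac{n-k}{2})}\,Q(E^{\perp})^{m}$, the scalar being fixed by testing against $e^{2m}$ for a unit $e\in E^{\perp}$ via the standard one-dimensional moment on $\mathbb S^{n-k-1}$. Second, and this is the main obstacle, I must evaluate
\[
\int_{\SO(n)}Q(\rho E_k)^{l}\,Q((\rho E_k)^{\perp})^{m}\,\nu(\intd\rho)=\gamma\,Q^{l+m},
\]
which is a multiple of $Q^{l+m}$ by rotation invariance. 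I determine $\gamma$ by evaluating both sides at $e^{2(l+m)}$: since $(Q(L)^{l}Q(L^{\perp})^{m})(e^{2(l+m)})=\|p_{L}e\|^{2l}\|p_{L^{\perp}}e\|^{2m}=t^{l}(1-t)^{m}$ with $t:=\|p_{L}e\|^{2}$, and since $t$ is $\mathrm{Beta}(\tfrac k2,\tfrac{n-k}{2})$-distributed for uniform $L=\rho E_k\in\GOp(n,k)$, the Beta integral yields $\gamma=\frac{\Gamma(\frac k2+l)\Gamma(\frac{n-k}{2}+m)\Gamma(\frac n2)}{\Gamma(\frac k2)\Gamma(\frac{n-k}{2})\Gamma(\frac n2+l+m)}$.

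\textbf{Step 4 (assembling the constants).} Collecting all factors and using $\omega_j=2\pi^{j/2}/\Gamma(j/2)$ together with $\Gamma(m+\frac12)/\Gamma(\frac12)=(2m)!/(4^{m}m!)$, the factors $\Gamma(\frac k2)$, $\Gamma(\frac k2+l)$, $\Gamma(\frac{n-k}{2})$ and $\Gamma(\frac{n-k}{2}+m)$ cancel pairwise, and the surviving scalar reduces to $\1\{s\text{ even}\}\frac{1}{(2\pi)^{s}(s/2)!}\frac{\Gamma(\frac{n-k+s}{2})}{\Gamma(\frac{n-k}{2})}\,c_{n,n}^{r,0,s/2+l}$. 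Since $\TenCM{n}{r}{0}{s/2+l}(P,\beta)=c_{n,n}^{r,0,s/2+l}\,Q^{s/2+l}\int_{P\cap\beta}x^{r}\,\cH^{n}(\intd x)$, this is exactly the claimed identity. The degenerate cases $k\in\{0,n\}$ are checked directly (for $k=n$ the measure $\mu_n$ is the normalized point mass at $\R^{n}$; for $k=0$ one has $Q(E^{0})=0$, so only $l=0$ carries content), consistent with the Gamma-continuation convention recalled around \eqref{Form_Gam_Cont}.
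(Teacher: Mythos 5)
Your argument is correct --- I checked the bookkeeping in Step 4: the product of your spherical-moment constant, the Beta-integral constant $\gamma$, and $c_{n,k}^{r,s,l}$ does collapse to the stated coefficient times $c_{n,n}^{r,0,\frac s2+l}$ --- but it is a genuinely different proof from the paper's. The paper evaluates nothing directly: it first shows, for $\alpha\subset E_k$ with $\cH^k(\alpha)=1$ and a suitable polytope $P'\subset E_k$ containing $\alpha$ in its relative interior, that
\begin{equation*}
  \int_{\AOp(n,k)} \TenCM{k}{r}{s}{l}(P\cap E,\beta)\,\mu_k(\intd E)
  =\int_{\GOp_n} \TenCM{k}{r}{s}{l}(P\cap gP',\beta\cap g\alpha)\,\mu(\intd g),
\end{equation*}
and then invokes the kinematic formula of \cite{HugWeis16b}, restated as Theorem \ref{Thm_KinemForm}, picking out the single surviving term via $\CM{q}(P',\alpha)=\1\{q=k\}$; in this way Theorems \ref{Thm_CF_j=k} and \ref{Thm_CF_j<k} are obtained simultaneously. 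You instead exploit what is special about $j=k$: whenever $\dim(P\cap E)=k$, the polytope $P\cap E$ is its own unique $k$-face with normal cone $E^\perp$ (and both sides vanish otherwise), so the integrand factorizes; the translative integration reduces by Fubini to $\int_{P\cap\beta}x^r\,\cH^n(\intd x)$, and the rotational average is computed from the spherical moment formula together with the $\mathrm{Beta}(\frac k2,\frac{n-k}{2})$ law of $\|p_L e\|^2$ for uniform $L\in\GOp(n,k)$. Your route is elementary and self-contained, giving an independent confirmation of the coefficient without appealing to the much harder kinematic formula; what it cannot do is extend to $j<k$, where faces of $P\cap E$ of every dimension contribute and the factorization disappears --- precisely the case the paper's unified kinematic argument is built to handle. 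One caveat: your closing claim that the case $k=0$ is ``consistent'' should be weakened. For $k=0$ and $l\ge1$ the left-hand side vanishes identically (as you note, $Q(E^{0})=0$), while the stated right-hand side involves $\TenCM{n}{r}{0}{\frac s2+l}(P,\beta)\neq 0$ in general; this degenerate case is implicitly excluded (the paper's own proof also needs $l=0$ when $j=0$, as required in Theorem \ref{Thm_KinemForm}), so you should exclude it as well rather than assert consistency.
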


    Theorem \ref{Thm_CF_j=k} generalizes Theorem 2.1 in \cite{HugSchnSchu08}. In fact, setting $l = 0$ and $\beta = \R^{n}$ one obtains the known result for Minkowski tensors. If $l \in \{0, 1\}$, one can even formulate Theorem \ref{Thm_CF_j=k} for a convex body, as in both of these cases all appearing valuations are defined on $\cK^{n}$. For $k = n$, the integral on the left-hand side of the formula in Theorem \ref{Thm_CF_j=k} is trivial. However, note that on the right-hand side the quotient of the Gamma functions has to be interpreted as $\1\{ s = 0 \}$, according to \eqref{Form_Gam_Cont}.

    Next, we state the formulae for general $j < k$.

    \begin{theorem} \label{Thm_CF_j<k}
      Let $P \in \cP^n$, $\beta \in \cB(\R^n)$, and $j, k, r, s, l \in \N_{0}$ with $j < k \leq n$, and with $l = 0$ if $j = 0$. Then,
      \begin{align*}
        \int_{\AOp(n, k)} \TenCM{j}{r}{s}{l} (P \cap E, \beta \cap E) \, \mu_k( \intd E) = \sum_{m = 0}^{\lfloor \frac s 2 \rfloor} \sum_{i = 0}^{m} d_{n, j, k}^{s, l, i, m} \, Q^{m - i} \TenCM{n - k + j}{r}{s - 2m}{l + i} (P, \beta),
      \end{align*}
      where
      \begin{align*}
        d_{n, j, k}^{s, l, i, m} & : = \frac{(-1)^{i}}{(4\pi)^{m} m!} \frac{\binom{m}{i}}{\pi^i} \frac {(i + l - 2)!} {(l - 2)!} \frac {\Gamma(\frac {n - k + j + 1} 2) \Gamma(\frac {k + 1} 2)} {\Gamma(\frac {n + 1} 2) \Gamma(\frac {j + 1} 2)} \\
        & \qquad \times \frac {\Gamma(\frac {n - k + j} 2 + 1)} {\Gamma(\frac {n - k + j + s} 2 + 1)} \frac {\Gamma(\frac {j + s} 2 - m + 1)} {\Gamma(\frac {j} 2 + 1)} \frac {\Gamma(\frac {n - k} 2 + m)} {\Gamma(\frac{n - k}{2})}.
      \end{align*}
    \end{theorem}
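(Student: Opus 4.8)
The plan is to derive these Crofton formulae as a consequence of the kinematic formulae for generalized tensorial curvature measures established in \cite{HugWeis16b}, following the standard passage from kinematic to Crofton integrals. The basic mechanism is the integral-geometric decomposition \eqref{decomp} of the Haar measure $\mu_k$ on $\AOp(n,k)$: integrating a motion-covariant functional over $\AOp(n,k)$ can be rewritten, via a fixed reference flat $E_k\in\GOp(n,k)$ together with an integration over $\SO(n)$ and a translation over $E_k^\perp$, in a form that matches the structure of a kinematic integral where one body is replaced by a flat. Thus I would first recall from \cite{HugWeis16b} the precise kinematic formula for $\TenCM{j}{r}{s}{l}$, and then specialize the moving body to a (degenerate) flat, so that the kinematic integral collapses to the Crofton integral on the left-hand side of Theorems~\ref{Thm_CF_j=k} and~\ref{Thm_CF_j<k}.

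\medskip

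For the diagonal case $j=k$ (Theorem~\ref{Thm_CF_j=k}) I would argue more directly, since here the intersection $P\cap E$ with a $k$-flat produces a $k$-dimensional set whose relevant face is $E$ itself, so that $\TenCM{k}{r}{s}{l}(P\cap E,\beta\cap E)$ reduces to an integral of $x^r$ over $P\cap E\cap\beta$ against the tensor $Q(E)^l\,u^s$ contributions coming from the normal directions within $E^\perp$. After applying \eqref{decomp}, the integration over the translates sweeps out the full $n$-dimensional Hausdorff measure on $P\cap\beta$ (yielding the factor $\TenCM{n}{r}{0}{\cdot}$), while the rotational average over $\SO(n)$ of the tensor $u^s$ over the unit sphere of a fixed $(n-k)$-dimensional complement produces the metric-tensor powers together with the explicit Gamma-quotient. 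The parity constraint $\1\{s\text{ even}\}$ and the constant $\frac{1}{(2\pi)^s(s/2)!}\frac{\Gamma((n-k+s)/2)}{\Gamma((n-k)/2)}$ should emerge from this spherical tensor average; the key computation is evaluating $\int_{\Sp{n-k-1}} u^s\,\cH^{n-k-1}(\intd u)$ and recognizing it as a multiple of $Q^{s/2}$ restricted to $E^\perp$, then reassembling the $Q$-powers after the rotation average. This is where one uses the standard formula for the average of an even tensor power of a uniform unit vector.

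\medskip

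For the off-diagonal case $j<k$ (Theorem~\ref{Thm_CF_j<k}), the heart of the matter is bookkeeping: the kinematic formula already produces, after the specialization of one body to a flat, a double sum over $m$ and $i$ of terms $Q^{m-i}\TenCM{n-k+j}{r}{s-2m}{l+i}(P,\beta)$, and the task is to track how the kinematic coefficients transform under the measure decomposition \eqref{decomp}. I would substitute the kinematic coefficients from \cite{HugWeis16b}, carry out the translational integration over $E_k^\perp$ (which shifts the degree index from $j$ to $n-k+j$), and perform the rotational averaging, again invoking spherical tensor-integral identities to convert sphere integrals of $u^{s-2m}$ into $Q$-powers. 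The resulting constants must then be simplified, repeatedly using the Gamma-function continuation \eqref{Form_Gam_Cont} to absorb the various $\omega$-normalizations hidden in the constants $c_{n,j}^{r,s,l}$ and to collapse products of factorials and $\pi$-powers into the closed form $d_{n,j,k}^{s,l,i,m}$. The main obstacle will be precisely this coefficient simplification: matching the binomial factor $\binom{m}{i}/\pi^i$, the falling-factorial term $(i+l-2)!/(l-2)!$, and the three Gamma-quotients to the raw output of the kinematic formula, while correctly handling the boundary conventions (the interpretation of $\Gamma(c-m+1)^{-1}=0$ and of quotients like $\Gamma(0+m)/\Gamma(0)=\1\{m=0\}$) so that the degenerate cases, in particular $l\in\{0,1\}$ where $(l-2)!$ must be read through the Gamma continuation, and the cases $j=0$ or $k=n$, come out correctly.
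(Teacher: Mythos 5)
Your overall strategy --- reducing the Crofton integral to the kinematic formula of \cite{HugWeis16b} --- is exactly the route the paper takes, but the way you propose to execute it for $j<k$ contains a genuine confusion that would derail the proof. Once the Crofton integral has been rewritten as a kinematic integral and Theorem \ref{Thm_KinemForm} has been applied, there is nothing left to average and nothing left to simplify: no rotational averaging, no spherical tensor integrals, no juggling of $\omega$-normalizations or Gamma factors. The kinematic formula delivers
\begin{equation*}
  \sum_{p = j}^{n} \sum_{m = 0}^{\lfloor \frac s 2 \rfloor} \sum_{i = 0}^{m} d_{n, j, n - p + j}^{s, l, i, m} \, Q^{m - i} \TenCM{p}{r}{s - 2m}{l + i} (P, \beta) \, \CM{n - p + j}(P', \alpha),
\end{equation*}
where $P'$ is a suitable $k$-dimensional polytope inside the reference flat $E_k$ and $\alpha \subset \relint P'$ is a window with $\cH^k(\alpha) = 1$, and the entire content of the Crofton proof is that the sum over $p$ collapses: $\CM{q}(P', \alpha) = \1\{ q = k \}$ (for $q < k$ the curvature measures of $P'$ are concentrated on its relative boundary, for $q > k$ they vanish), so only $p = n - k + j$ survives and $d_{n,j,k}^{s,l,i,m}$ is \emph{literally} the kinematic coefficient, with no further computation. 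Your plan to ``carry out the translational integration over $E_k^\perp$'' and then ``perform the rotational averaging, invoking spherical tensor-integral identities to convert sphere integrals of $u^{s-2m}$ into $Q$-powers,'' followed by a simplification of constants, describes the direct computation that the kinematic coefficients already encapsulate: doing it \emph{after} applying Theorem \ref{Thm_KinemForm} would double-count the averaging, while doing it \emph{instead} would amount to re-proving the kinematic formula of \cite{HugWeis16b} from scratch --- precisely the computation this approach is designed to avoid. Relatedly, the index shift $j \mapsto n - k + j$ is not produced by translational integration; it comes from the pairing of $\TenCM{p}{r}{s-2m}{l+i}(P,\cdot)$ with $\CM{n-p+j}(P',\cdot)$ in the kinematic formula together with the support property of $\CM{q}(P',\cdot)$ just described.

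Two technical devices needed to make the reduction rigorous are also missing from your sketch. First, \eqref{decomp} provides only the $(n-k)$-dimensional translations over $E_k^\perp$, whereas the kinematic integral runs over all translations of $\R^n$; the fibre of $g \mapsto g E_k$ over a fixed flat has infinite $\mu$-measure, so the two integrals cannot simply be ``matched'' without a normalizing window. The paper bridges this by fixing $\alpha \in \cB(E_k)$ with $\cH^k(\alpha) = 1$ and proving, via Fubini (the in-flat translations integrate out against $\cH^k(\alpha) = 1$), that $\int_{\AOp(n,k)} \TenCM{j}{r}{s}{l}(P \cap E, \beta)\, \mu_k(\intd E) = \int_{\GOp_n} \TenCM{j}{r}{s}{l}(P \cap g E_k, \beta \cap g\alpha)\, \mu(\intd g)$. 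Second, Theorem \ref{Thm_KinemForm} is a statement about pairs of \emph{polytopes}, and the unbounded flat $E_k$ is not one; the paper replaces $E_k$ by a polytope $P' \subset E_k$ with $\alpha \subset \relint P'$, chosen so large that $P \cap g E_k = P \cap g P'$ whenever $P \cap g\alpha \neq \emptyset$. These two devices are exactly what make the evaluation $\CM{q}(P', \alpha) = \1\{ q = k \}$, and hence the collapse of the sum, available. (Your separate direct argument sketched for the diagonal case $j = k$ could plausibly be carried out for Theorem \ref{Thm_CF_j=k}, but the paper obtains that case, too, by the same collapse, using the redefined coefficient $d_{n,j,j}^{s,l,i,m}$.)
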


    For $k = n$ the coefficient in Theorem \ref{Thm_CF_j<k} has to be interpreted as
    \begin{align*}
      d_{n, j, n}^{s, l, i, m} & = \1\{ i = m = 0 \},
    \end{align*}
    according to \eqref{Form_Gam_Cont}, so that the result is a tautology in this case.
		
    Several remarkable facts concerning the coefficients $ d_{n, j, k}^{s, l, i, m}$ should be recalled from \cite{HugWeis16b}.
    First, the ratio ${(i + l - 2)!} /{(l - 2)!}$ has to be interpreted in terms of Gamma functions and relation \eqref{Form_Gam_Cont} if $l\in\{0,1\}$.
    The corresponding special cases will be considered separately in the following two theorems and the subsequent corollaries.
		Second, the coefficients are independent of the tensorial parameter r and, due to our normalization of the generalized tensorial curvature measures, depend only on $l$ through the ratio ${(i + l - 2)!} /{(l - 2)!}$.
    Third, only tensors $\TenCM{n-k+j}{r}{s - 2m}{p} (P, \beta) $ with $p \ge l$ show up on the right side of the kinematic formula.
    Using Legendre's duplication formula, we could shorten the given expressions for the  coefficients $ d_{n, j, k}^{s, l, i, m}$ even further.
    However, the present form has the advantage of exhibiting that the factors in the second line cancel each other if $s=0$ (and hence also $m=i=0$).
    Furthermore, in general the coefficients are signed in contrast to the classical kinematic formula.
    We shall see below that for $l\in\{0,1\}$ all coefficients are nonnegative.

  \subsection{(Generalized) tensorial curvature measures on convex bodies}

   The generalized tensorial curvature measures $\TenCM{j}{r}{s}{l}$ can be continuously extended to all convex bodies if $l \in \{0, 1\}$. In these two cases, Theorem \ref{Thm_CF_j=k} holds for general convex bodies as well. For this reason, we restrict our attention to the cases where $j < k$ in the following. The next theorems are stated without a proof, as they basically follow from Theorem \ref{Thm_CF_j<k} and approximation of the given convex body by polytopes (using the weak continuity of the corresponding generalized tensorial curvature measures and the usual arguments needed to take care of exceptional positions).

    We start with the formula for $l = 1$.

    \begin{theorem} \label{Thm_CF_j<k_l=1}
      Let $K \in \cK^n$, $\beta \in \cB(\R^n)$, and $j, k, r, s \in \N_{0}$ with $0 < j < k \leq n$. Then,
      \begin{align*}
        \int_{\AOp(n, k)} \TenCM{j}{r}{s}{1} (K \cap E, \beta \cap E) \, \mu_k( \intd E) = \sum_{m = 0}^{\lfloor \frac s 2 \rfloor} d_{n, j, k}^{s, 1, 0, m} \, Q^{m} \TenCM{n - k + j}{r}{s - 2m}{1} (K, \beta),
      \end{align*}
      where
      \begin{align*}
        d_{n, j, k}^{s, 1, 0, m} & = \frac{1}{(4\pi)^{m} m!} \frac {\Gamma(\frac {n - k + j + 1} 2) \Gamma(\frac {k + 1} 2)} {\Gamma(\frac {n + 1} 2) \Gamma(\frac {j + 1} 2)} \\
        & \qquad \times \frac {\Gamma(\frac {n - k + j} 2 + 1)} {\Gamma(\frac {n - k + j + s} 2 + 1)} \frac {\Gamma(\frac {j + s} 2 - m + 1)} {\Gamma(\frac {j} 2 + 1)} \frac {\Gamma(\frac {n - k} 2 + m)} {\Gamma(\frac{n - k}{2})}.
      \end{align*}
    \end{theorem}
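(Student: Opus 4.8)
The plan is to obtain Theorem~\ref{Thm_CF_j<k_l=1} directly from the polytopal formula of Theorem~\ref{Thm_CF_j<k} by specializing $l$ to the value $1$ and then extending from polytopes to arbitrary convex bodies by approximation. Since $0<j<k\le n$, all hypotheses of Theorem~\ref{Thm_CF_j<k} are met (in particular the constraint that $l=0$ is required only when $j=0$ is irrelevant here), so for every polytope $P\in\cP^n$ we may write
\begin{align*}
\int_{\AOp(n, k)} \TenCM{j}{r}{s}{1} (P \cap E, \beta \cap E) \, \mu_k( \intd E) = \sum_{m = 0}^{\lfloor \frac s 2 \rfloor} \sum_{i = 0}^{m} d_{n, j, k}^{s, 1, i, m} \, Q^{m - i} \TenCM{n - k + j}{r}{s - 2m}{1 + i} (P, \beta).
\end{align*}
The substance of the argument is to show that the inner summation over $i$ degenerates to its single term $i=0$, which rests entirely on the correct reading of the factor $(i+l-2)!/(l-2)!$ occurring in $d_{n,j,k}^{s,l,i,m}$ at the boundary value $l=1$.

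To carry this out, I would write $(i+l-2)!/(l-2)! = \Gamma(i+l-1)/\Gamma(l-1)$ and set $l=1$, obtaining $\Gamma(i)/\Gamma(0)$, in which the denominator is a pole and must be continued according to \eqref{Form_Gam_Cont}. By \eqref{Form_Gam_Cont} with $c=0$ one has $\Gamma(i)/\Gamma(0) = (-1)^i/\Gamma(1-i)$, which equals $\1\{i=0\}$ since $\Gamma(1-i)^{-1}=0$ for $i\ge 1$ while the value $1$ is obtained at $i=0$. Consequently $d_{n,j,k}^{s,1,i,m}=0$ for every $i\ge 1$, and for $i=0$ the prefactor $(-1)^0\binom{m}{0}\pi^{-0}=1$ together with $\Gamma(i)/\Gamma(0)\big|_{i=0}=1$ reduces $d_{n,j,k}^{s,1,0,m}$ to exactly the coefficient displayed in Theorem~\ref{Thm_CF_j<k_l=1}. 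Collapsing the double sum to the surviving terms (with $i=0$, hence $Q^{m-i}=Q^m$ and $1+i=1$) then yields the asserted identity for every polytope $P$.

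It remains to pass from $P\in\cP^n$ to an arbitrary $K\in\cK^n$. Here I would use that for $l=1$ the generalized tensorial curvature measure $\TenCM{j}{r}{s}{1}$ possesses a weakly continuous extension to $\cK^n$ (see \cite[Theorem~2.3]{HugSchn14}), and approximate $K$ by polytopes $P_\ell\to K$ in the Hausdorff metric. On the right-hand side, weak continuity of each $\TenCM{n-k+j}{r}{s-2m}{1}(\cdot,\cdot)$ gives convergence componentwise at once. On the left-hand side, one exploits that $P_\ell\cap E\to K\cap E$ for $\mu_k$-almost all $E\in\AOp(n,k)$ — the exceptional flats (those missing $K$, touching only its boundary, or otherwise in degenerate position) forming a $\mu_k$-null set — while only flats meeting the compact set $K+B^n$ contribute, so the relevant part of $\mu_k$ is finite. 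Testing both sides against a bounded continuous function and combining the $\mu_k$-almost everywhere weak convergence $\TenCM{j}{r}{s}{1}(P_\ell\cap E,\cdot)\to\TenCM{j}{r}{s}{1}(K\cap E,\cdot)$ with a uniform bound on the (componentwise) total masses then permits the interchange of limit and integration, transporting the polytopal identity to $K$.

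The main obstacle I anticipate lies in this last interchange: producing a $\mu_k$-integrable dominating function for the total masses of $\TenCM{j}{r}{s}{1}(P_\ell\cap E,\cdot)$ that is uniform in $\ell$, and handling the exceptional positions rigorously, so that the identity genuinely survives passage to the limit. This is the routine but technically delicate ``usual argument'' alluded to in the text. By contrast, the genuinely new content of the theorem is the Gamma-function collapse of the $i$-summation, which is elementary once the continuation convention of \eqref{Form_Gam_Cont} is applied with care.
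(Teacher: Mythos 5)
Your proposal follows exactly the route the paper takes: Theorem \ref{Thm_CF_j<k_l=1} is stated there without proof precisely because it follows from Theorem \ref{Thm_CF_j<k} specialized to $l=1$ --- where, as you correctly compute, the convention \eqref{Form_Gam_Cont} makes the ratio $(i+l-2)!/(l-2)! = \Gamma(i)/\Gamma(0)$ collapse to $\1\{i=0\}$, killing all terms with $i\ge 1$ --- combined with approximation of $K$ by polytopes using the weak continuity of the extensions for $l\in\{0,1\}$ and the usual treatment of exceptional flat positions. Your Gamma-function collapse and the limiting argument are both correct, so this is essentially the paper's own (sketched) proof carried out in more detail.
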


    Next, we state the formula for $l = 0$.

    \begin{theorem} \label{Thm_CF_j<k_l=0}
      Let $K \in \cK^n$, $\beta \in \cB(\R^n)$ and $j, k, r, s \in \N_{0}$ with $j < k \leq n$. Then,
      \begin{align*}
        \int_{\AOp(n, k)} \TenCM{j}{r}{s}{0} (K \cap E, \beta \cap E) \, \mu_k( \intd E) = \sum_{m = 0}^{\lfloor \frac s 2 \rfloor} \sum_{i = 0}^{1} d_{n, j, k}^{s, 0, i, m} \, Q^{m - i} \TenCM{n - k + j}{r}{s - 2m}{i} (K, \beta),
      \end{align*}
      where
      \begin{align*}
        d_{n, j, k}^{s, 0, i, m} & = \frac{1}{(4\pi)^{m} m!} \frac{\binom{m}{i}}{\pi^i} \frac {\Gamma(\frac {n - k + j + 1} 2) \Gamma(\frac {k + 1} 2)} {\Gamma(\frac {n + 1} 2) \Gamma(\frac {j + 1} 2)} \\
         & \qquad \times \frac {\Gamma(\frac {n - k + j} 2 + 1)} {\Gamma(\frac {n - k + j + s} 2 + 1)} \frac {\Gamma(\frac {j + s} 2 - m + 1)} {\Gamma(\frac {j} 2 + 1)} \frac {\Gamma(\frac {n - k} 2 + m)} {\Gamma(\frac{n - k}{2})}.
      \end{align*}
    \end{theorem}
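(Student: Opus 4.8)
The plan is to obtain Theorem~\ref{Thm_CF_j<k_l=0} as the specialization $l=0$ of the polytope formula in Theorem~\ref{Thm_CF_j<k}, followed by an approximation of the given convex body by polytopes. Since $j<k$ and $l=0$, the side condition ``$l=0$ if $j=0$'' of Theorem~\ref{Thm_CF_j<k} holds automatically, so that result applies on $\cP^n$ without restriction. The first step is to evaluate the coefficient $d_{n,j,k}^{s,l,i,m}$ at $l=0$. The only delicate factor is $(i+l-2)!/(l-2)!$, which is interpreted as $\Gamma(i+l-1)/\Gamma(l-1)$ and continued to $l=0$ by means of \eqref{Form_Gam_Cont}. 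Taking $c=1$ and $m=i$ in \eqref{Form_Gam_Cont} gives $\Gamma(i-1)/\Gamma(-1)=(-1)^{i}\,\Gamma(2)/\Gamma(2-i)$, which equals $(-1)^{i}$ for $i\in\{0,1\}$ and vanishes for $i\ge 2$, since then $\Gamma(2-i)$ has a pole. Hence the factor $(-1)^{i}$ appearing in $d_{n,j,k}^{s,l,i,m}$ is cancelled, the inner sum collapses to $i\in\{0,1\}$, and the coefficient reduces to the nonnegative expression $d_{n,j,k}^{s,0,i,m}$ displayed in the theorem. This settles the statement for polytopes $P\in\cP^n$.

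To treat a general body $K\in\cK^n$, I would use that for $l=0$ and for the indices $i\in\{0,1\}$ on the right-hand side all the measures $\TenCM{j}{r}{s}{0}(\cdot,\cdot)$ and $\TenCM{n-k+j}{r}{s-2m}{i}(\cdot,\cdot)$ are defined on $\cK^n$ (via \eqref{alsomeas} for $l=0$, and via the continuous extension for $l=1$) and are weakly continuous with respect to the body. Since $0\le n-k+j\le n-1$, all of these are of the type covered by \eqref{alsomeas} and its extension, and none of the degenerate cases occur. Because both sides of the asserted identity are tensor-valued measures in the variable $\beta$, it suffices to prove the equality after integrating a fixed continuous function $g$ with compact support in place of $\1_{\beta\cap E}$, respectively $\1_\beta$; establishing the resulting scalar identities for all such $g$ yields equality of the measures and hence the claim for every $\beta\in\cB(\R^n)$.

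With $g$ fixed, I would choose polytopes $P_\nu\to K$ in the Hausdorff metric, assuming all $P_\nu$ and $K$ to lie in a fixed ball $B$. For $\mu_k$-almost every flat $E\in\AOp(n,k)$ the section $K\cap E$ is a nondegenerate $k$-dimensional body and $P_\nu\cap E\to K\cap E$, so the weak continuity of the tensorial curvature measures inside $E$ gives $\int g(x)\,\TenCM{j}{r}{s}{0}(P_\nu\cap E,\intd x)\to\int g(x)\,\TenCM{j}{r}{s}{0}(K\cap E,\intd x)$; the remaining (exceptional) flats, where the section is lower-dimensional or the convergence fails, form a $\mu_k$-null set. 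For the interchange of limit and $\mu_k$-integration I would dominate each integrand by $\|g\|_\infty$ times the total variation of $\TenCM{j}{r}{s}{0}(P_\nu\cap E,\cdot)$, which is at most a constant $C=C(B,r,s)$ times $V_j(P_\nu\cap E)\le V_j(B\cap E)$ by monotonicity of the intrinsic volumes; the function $E\mapsto V_j(B\cap E)$ is $\mu_k$-integrable by the classical Crofton formula \eqref{Form_Croft_Classic}. Dominated convergence then transfers the polytope identity to $K$ at the level of these test integrals, and the same approximation applied to the finitely many terms $\TenCM{n-k+j}{r}{s-2m}{i}(K,\cdot)$, $i\in\{0,1\}$, on the right-hand side (all weakly continuous) completes the argument.

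The main obstacle is precisely this passage to the limit under the integral over $\AOp(n,k)$: one must simultaneously discard the $\mu_k$-null set of exceptional flats where the sections degenerate and produce a $\mu_k$-integrable dominating function for the total masses of $\TenCM{j}{r}{s}{0}(P_\nu\cap E,\cdot)$ uniformly in $\nu$. These are the ``usual arguments needed to take care of exceptional positions'' referred to in the text; they follow the scheme already established for the scalar curvature measures, and the tensorial weights $x^{r}u^{s}$ cause no extra difficulty, since $x$ ranges over the fixed bounded region $B$ and $\|u\|=1$.
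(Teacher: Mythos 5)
Your proposal is correct and follows exactly the route the paper intends: the paper states this theorem without proof, remarking that it follows from Theorem~\ref{Thm_CF_j<k} with $l=0$ (interpreting $(i+l-2)!/(l-2)!$ via the continuation \eqref{Form_Gam_Cont}, which collapses the inner sum to $i\in\{0,1\}$ and cancels the sign $(-1)^i$, just as you compute) together with approximation by polytopes using weak continuity and the usual handling of exceptional flats. Your fleshed-out approximation argument (test functions, a.e.\ convergence of sections, domination via monotonicity of $V_j$ and the classical Crofton formula) is a valid implementation of what the paper leaves implicit.
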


    In Theorem \ref{Thm_CF_j<k_l=0}, we have $d_{n, j, k}^{s, 0, 1, 0} = 0$ so that in fact the undefined tensor $Q^{-1}$ does not appear.

    For the special case $j = k - 1$, we deduce two more Crofton formulae. The first concerns the generalized tensorial curvature measures $\TenCM{k - 1}{r}{s}{1}$.

    \begin{corollary} \label{Cor_CF_j=k-1_l=1}
      Let $K \in \cK^n$, $\beta \in \cB(\R^n)$, and $k, r, s \in \N_{0}$ with $0 < k < n$. Then,
      \begin{align*}
        \int_{\AOp(n, k)} \TenCM{k - 1}{r}{s}{1} (K \cap E, \beta \cap E) \, \mu_k( \intd E) = \sum_{m = 0}^{\lfloor \frac s 2 \rfloor} \iota_{n, k}^{s, m} \, Q^{m} \TenCM{n - 1}{r}{s - 2m}{1} (K, \beta),
      \end{align*}
      where
      \begin{align*}
        \iota_{n, k}^{s, m} & : = \frac{1}{(4\pi)^{m} m!} \frac {\Gamma(\frac {n} 2) \Gamma(\frac {k + s + 1} 2 - m) \Gamma(\frac {n - k} 2 + m)} {\Gamma(\frac {n + s + 1} 2) \Gamma(\frac {k} 2) \Gamma(\frac{n - k}{2})}.
      \end{align*}
    \end{corollary}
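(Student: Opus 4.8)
The plan is to obtain Corollary \ref{Cor_CF_j=k-1_l=1} as the special case $j = k - 1$ of Theorem \ref{Thm_CF_j<k_l=1}, so that everything reduces to simplifying the coefficient. First I would observe that substituting $j = k - 1$ into the index of the tensor on the right-hand side of Theorem \ref{Thm_CF_j<k_l=1} gives $n - k + j = n - 1$, so that each summand becomes $Q^{m}\,\TenCM{n-1}{r}{s-2m}{1}(K, \beta)$, which matches the tensors in the corollary, and the single sum over $m$ from $0$ to $\lfloor s/2\rfloor$ is already of the required shape. Since the theorem assumes $0 < j < k \le n$, the substitution is legitimate precisely when $k \ge 2$ (and $k < n$ as required), which is the range in which both sides are genuinely of interest; for $k=1$ one has $j=0$ and the left-hand integrand $\TenCM{0}{r}{s}{1}$ is identically zero by the convention fixed for the degenerate measures, so that case falls outside the scope of the applied theorem and need not be treated by this reduction.

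The heart of the matter is then to verify the identity $d_{n, k - 1, k}^{s, 1, 0, m} = \iota_{n, k}^{s, m}$. Setting $j = k - 1$ in the arguments of the Gamma functions defining $d_{n, j, k}^{s, 1, 0, m}$, I would record the six substitutions
\begin{align*}
  \frac{n - k + j + 1}{2} &= \frac{n}{2}, & \frac{j + 1}{2} &= \frac{k}{2}, & \frac{n - k + j}{2} + 1 &= \frac{n + 1}{2},\\
  \frac{n - k + j + s}{2} + 1 &= \frac{n + s + 1}{2}, & \frac{j + s}{2} - m + 1 &= \frac{k + s + 1}{2} - m, & \frac{j}{2} + 1 &= \frac{k + 1}{2}.
\end{align*}
Feeding these into the expression for $d_{n, j, k}^{s, 1, 0, m}$, the factor $\Gamma(\frac{n + 1}{2})$ coming from $\Gamma(\frac{n-k+j}{2}+1)$ cancels the $\Gamma(\frac{n+1}{2})$ in the first line, and the two occurrences of $\Gamma(\frac{k + 1}{2})$ (from $\Gamma(\frac{n-k+j+1}{2})\Gamma(\frac{k+1}{2})$ and from $\Gamma(\frac j2 + 1)$) cancel as well. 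What survives is exactly
\begin{align*}
  \frac{1}{(4\pi)^{m} m!}\,\frac{\Gamma(\tfrac{n}{2})\,\Gamma(\tfrac{k + s + 1}{2} - m)\,\Gamma(\tfrac{n - k}{2} + m)}{\Gamma(\tfrac{n + s + 1}{2})\,\Gamma(\tfrac{k}{2})\,\Gamma(\tfrac{n - k}{2})} = \iota_{n, k}^{s, m},
\end{align*}
the prefactor $\tfrac{1}{(4\pi)^m m!}$ being unaffected by the substitution.

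Because the corollary is thus an instance of Theorem \ref{Thm_CF_j<k_l=1} followed by a purely algebraic reduction of the constant, I do not expect a genuine obstacle: there is no new integral-geometric content to establish. The only points requiring care are the bookkeeping of the Gamma-function arguments under $j = k-1$ and, should one wish to phrase the result uniformly, the interpretation of any boundary ratios of Gamma functions via \eqref{Form_Gam_Cont}; but for $2 \le k < n$ all of the arguments appearing are in the generic range, so no such continuation is actually needed.
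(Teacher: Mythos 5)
Your proposal is correct and coincides with the paper's own proof: the paper obtains the corollary by exactly this specialization, simply setting $\iota_{n, k}^{s, m} := d_{n, k - 1, k}^{s, 1, 0, m}$ in Theorem \ref{Thm_CF_j<k_l=1}, and your Gamma-function bookkeeping (the cancellation of $\Gamma(\frac{n + 1}{2})$ and of $\Gamma(\frac{k + 1}{2})$) is precisely the computation behind that identity.

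One correction to your aside on $k = 1$, though. You are right that this case lies outside the scope of Theorem \ref{Thm_CF_j<k_l=1}, but it does not follow that it ``need not be treated'': the corollary as printed asserts the formula for all $0 < k < n$, and at $k = 1$ the formula is in fact false rather than vacuously true. The left-hand side vanishes since $\TenCM{0}{r}{s}{1} \equiv 0$, but the right-hand side does not vanish identically: the coefficients $\iota_{n, 1}^{s, m}$ are strictly positive, and expanding each $\TenCM{n - 1}{r}{s - 2m}{1}$ via \eqref{FormTenCMRel} writes the right-hand side in terms of the functionals $Q^{a} \TenCM{n - 1}{r}{b}{0}$, which are linearly independent by Theorem \ref{linindep}; in this expansion the term $Q^{0} \TenCM{n - 1}{r}{s + 2}{0}$ occurs exactly once, with coefficient $-\frac{4 \pi^{2} (s + 2)}{n - 1} \iota_{n, 1}^{s, 0} \neq 0$. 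So the hypothesis should read $1 < k < n$, as in Corollaries \ref{Cor_CF_j=k-1_l=1Alt} and \ref{Cor_CF_j=k-1_l=0}. This defect belongs to the paper's statement (its own proof, like yours, covers only $k \geq 2$), but your justification for setting the case aside points in the wrong direction: a vanishing left-hand side is exactly what breaks the identity there, not what saves it.
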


    Due to the easily verified relation
    \begin{align}
      \TenCM{n - 1}{r}{s - 2m}{1} = \frac{2\pi}{n - 1} \left( Q \TenCM{n - 1}{r}{s - 2m}{0} - 2\pi(s - 2m + 2) \TenCM{n - 1}{r}{s - 2m + 2}{0} \right) , \label{FormTenCMRel}
    \end{align}
    Corollary \ref{Cor_CF_j=k-1_l=1} can be transformed in such a way that only the tensorial curvature measures $\TenCM{n - 1}{r}{s - 2m}{0}$ are involved on the right-hand side of the preceding formula. This is presented in the following corollary.

    \begin{corollary} \label{Cor_CF_j=k-1_l=1Alt}
      Let $K \in \cK^n$, $\beta \in \cB(\R^n)$, and $k, r, s \in \N_{0}$ with $1 < k < n$. Then,
      \begin{align*}
        \int_{\AOp(n, k)} \TenCM{k - 1}{r}{s}{1} (K \cap E, \beta \cap E) \, \mu_k( \intd E) = \sum_{m = 0}^{\lfloor \frac s 2 \rfloor + 1} \lambda_{n, k}^{s, m} \, Q^{m} \TenCM{n - 1}{r}{s - 2m + 2}{0} (K, \beta),
      \end{align*}
      where
      \begin{align*}
        \lambda_{n, k}^{s, m} & := \frac{\pi}{(n - 1)(4\pi)^{m - 1} m!} \frac {\Gamma(\frac {n} 2)\Gamma(\frac {k + s + 1} 2 - m) \Gamma(\tfrac {n - k} 2 + m - 1)} {\Gamma(\frac {n + s + 1} 2) \Gamma(\frac {k} 2) \Gamma(\frac{n - k}{2})} \\
        & \qquad \times \left( 2 m (\tfrac {k + s + 1} 2 - m) - (s - 2m + 2) (\tfrac {n - k} 2 + m - 1) \right),
        \intertext{for $m \in \{ 1, \ldots, \lfloor \frac s 2 \rfloor \}$, and}
        \lambda_{n, k}^{s, 0} & := - \frac{4\pi^{2}(s + 2)}{n - 1} \frac {\Gamma(\frac {n} 2) \Gamma(\frac {k + s + 1} 2)} {\Gamma(\frac {n + s + 1} 2) \Gamma(\frac {k} 2)}, \\
        \lambda_{n, k}^{s, \lfloor \frac s 2 \rfloor + 1}& := \frac{2 \pi}{(n - 1)(4\pi)^{\lfloor \frac s 2 \rfloor} (\lfloor \frac s 2 \rfloor)!} \frac {\Gamma(\frac {n} 2) \Gamma(\frac {k + s + 1} 2 - \lfloor \frac s 2 \rfloor) \Gamma(\frac {n - k} 2 + \lfloor \frac s 2 \rfloor)} {\Gamma(\frac {n + s + 1} 2) \Gamma(\frac {k} 2) \Gamma(\frac{n - k}{2})}.
      \end{align*}
    \end{corollary}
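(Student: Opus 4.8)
\emph{Approach.} The plan is to obtain Corollary~\ref{Cor_CF_j=k-1_l=1Alt} purely algebraically from Corollary~\ref{Cor_CF_j=k-1_l=1} by substituting the conversion relation~\eqref{FormTenCMRel}, which trades each measure $\TenCM{n - 1}{r}{s - 2m}{1}$ for a combination of $\TenCM{n - 1}{r}{s - 2m}{0}$ and $\TenCM{n - 1}{r}{s - 2m + 2}{0}$. Starting from the identity provided by Corollary~\ref{Cor_CF_j=k-1_l=1}, I would insert~\eqref{FormTenCMRel} into each summand and multiply through by $Q^{m}$, producing for each $m\in\{0,\dots,\lfloor \frac s2\rfloor\}$ the two contributions
\begin{align*}
\frac{2\pi}{n-1}\,\iota_{n,k}^{s,m}\, Q^{m+1}\TenCM{n - 1}{r}{s - 2m}{0}(K,\beta)
\quad\text{and}\quad
-\frac{4\pi^{2}(s-2m+2)}{n-1}\,\iota_{n,k}^{s,m}\, Q^{m}\TenCM{n - 1}{r}{s - 2m + 2}{0}(K,\beta).
\end{align*}

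\emph{Re-indexing.} Because $\TenCM{n - 1}{r}{s - 2m}{0}=\TenCM{n - 1}{r}{s - 2(m+1) + 2}{0}$, the first contribution has the target shape $Q^{m'}\TenCM{n - 1}{r}{s - 2m' + 2}{0}$ with $m'=m+1$, while the second already has it with $m'=m$. Collecting the coefficient of $Q^{m'}\TenCM{n - 1}{r}{s - 2m' + 2}{0}$ for $m'\in\{0,\dots,\lfloor\frac s2\rfloor+1\}$ thus yields
\begin{align*}
\lambda_{n,k}^{s,m'}=\frac{2\pi}{n-1}\,\iota_{n,k}^{s,m'-1}\,\1\{m'\ge 1\}-\frac{4\pi^{2}(s-2m'+2)}{n-1}\,\iota_{n,k}^{s,m'}\,\1\{m'\le\lfloor\tfrac s2\rfloor\}.
\end{align*}
This directly gives the two boundary cases: for $m'=0$ only the second term survives, producing the stated $\lambda_{n,k}^{s,0}=-\frac{4\pi^{2}(s+2)}{n-1}\,\iota_{n,k}^{s,0}$, and for $m'=\lfloor\frac s2\rfloor+1$ only the first survives, producing $\lambda_{n,k}^{s,\lfloor s/2\rfloor+1}=\frac{2\pi}{n-1}\,\iota_{n,k}^{s,\lfloor s/2\rfloor}$. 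After inserting the explicit form of $\iota_{n,k}^{s,m}$ from Corollary~\ref{Cor_CF_j=k-1_l=1}, both reduce to the two closed expressions listed in the statement.

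\emph{Middle range.} For $1\le m'\le\lfloor\frac s2\rfloor$ both terms are present, and I would factor out the common prefactor $\frac{\pi}{(n-1)(4\pi)^{m'-1}m'!}\,\frac{\Gamma(\frac n2)\Gamma(\frac{k+s+1}2-m')\Gamma(\frac{n-k}2+m'-1)}{\Gamma(\frac{n+s+1}2)\Gamma(\frac k2)\Gamma(\frac{n-k}2)}$. Using the functional equations $\Gamma(\frac{k+s+1}2-m'+1)=(\frac{k+s+1}2-m')\Gamma(\frac{k+s+1}2-m')$ and $\Gamma(\frac{n-k}2+m')=(\frac{n-k}2+m'-1)\Gamma(\frac{n-k}2+m'-1)$, together with $m'!/(m'-1)!=m'$ and $4\pi^{2}/(4\pi)=\pi$, the first term contributes $2m'(\frac{k+s+1}2-m')$ and the second $-(s-2m'+2)(\frac{n-k}2+m'-1)$ to the bracket, which is exactly the claimed factor.

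\emph{Main obstacle.} The argument is entirely bookkeeping; the only delicate points are the clean separation of the boundary indices $m'=0$ and $m'=\lfloor\frac s2\rfloor+1$, where precisely one of the two contributions drops out, and the consistent use of the continuation convention~\eqref{Form_Gam_Cont} so that the Gamma quotients remain meaningful in degenerate cases. The hypothesis $1<k<n$ ensures $j=k-1\ge 1$, so that $\TenCM{k - 1}{r}{s}{1}$ is a genuine (nontrivial) generalized tensorial curvature measure and the reduction of the Gamma arguments stays valid.
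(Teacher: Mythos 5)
Your proposal is correct and follows essentially the same route as the paper: substitute \eqref{FormTenCMRel} into Corollary \ref{Cor_CF_j=k-1_l=1}, shift the index in the $Q^{m+1}\TenCM{n-1}{r}{s-2m}{0}$ contributions, collect the coefficient of each $Q^{m}\TenCM{n-1}{r}{s-2m+2}{0}$ (with exactly one term surviving at the boundary indices $m=0$ and $m=\lfloor s/2\rfloor+1$), and reduce the middle-range coefficients via the functional equation of the Gamma function to the stated bracket. The algebraic details you give, including the factor $2m(\tfrac{k+s+1}{2}-m)-(s-2m+2)(\tfrac{n-k}{2}+m-1)$, match the paper's computation.
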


    The second special case concerns the tensorial curvature measures $\TenCM{k - 1}{r}{s}{0}$. Although
		the result has been derived in a different way in our previous work \cite[Theorem~4.12]{HugWeis16a},
		we state it  and  derive it  as a special case of the present more general approach.

    \begin{corollary} \label{Cor_CF_j=k-1_l=0}
      Let $K \in \cK^{n}$, $\beta \in \cB(\R^{n})$, and $k, r, s \in \N_{0}$ with $1 < k < n$. Then
      \begin{align*}
        & \int_{\AOp(n, k)} \TenCM{k - 1}{r}{s}{0} (K \cap E, \beta \cap E) \, \mu_k(\intd E) =
        \sum_{m = 0}^{\lfloor \frac s 2 \rfloor} \kappa_{n, k}^{s, m} \, Q^{m} \TenCM{n - 1}{r}{s - 2m}{0} (K, \beta),
      \end{align*}
      where
      \begin{align*}
        \kappa_{n, k}^{s, m} : = \frac{k - 1}{n - 1} \frac{1}{(4\pi)^{m} m!} \frac {\Gamma(\frac {n} 2) \Gamma(\tfrac {k + s - 1} 2 - m) \Gamma(\tfrac {n - k} 2 + m)} {\Gamma(\frac {n + s - 1} 2) \Gamma(\frac {k} 2) \Gamma(\frac{n - k}{2})}
      \end{align*}
      if $m \neq \frac {s - 1} 2$, and
      \begin{equation*}
        \kappa_{n, k}^{s, \frac {s - 1} 2} : = \frac{k (n + s - 2)}{2(n - 1)} \frac{1}{(4\pi)^{\frac {s - 1} 2} \frac {s - 1} 2!} \frac {\Gamma(\frac {n} 2) \Gamma(\tfrac {n - k + s - 1} 2)} {\Gamma(\frac {n + s + 1} 2) \Gamma(\frac{n - k}{2})}.
      \end{equation*}
    \end{corollary}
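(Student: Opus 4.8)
The plan is to derive Corollary \ref{Cor_CF_j=k-1_l=0} by specializing Theorem \ref{Thm_CF_j<k_l=0} to $j = k-1$, which is legitimate since the hypothesis $1 < k < n$ guarantees $0 < j = k-1 < k < n$. With this choice the step index becomes $n-k+j = n-1$, and the inner sum over $i$ runs only over $i \in \{0,1\}$. Hence, for each $m$, the right-hand side of Theorem \ref{Thm_CF_j<k_l=0} contributes a term $d_{n,k-1,k}^{s,0,0,m}\,Q^m\TenCM{n-1}{r}{s-2m}{0}$ together with a term $d_{n,k-1,k}^{s,0,1,m}\,Q^{m-1}\TenCM{n-1}{r}{s-2m}{1}$ carrying the undesired weight $l=1$; the $m=0$ contribution of the latter vanishes because $\binom{0}{1}=0$, so no $Q^{-1}$ ever appears.

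The second step is to eliminate the $l=1$ measures by means of relation \eqref{FormTenCMRel}, which rewrites each $\TenCM{n-1}{r}{s-2m}{1}$ as a linear combination of $Q\TenCM{n-1}{r}{s-2m}{0}$ and $\TenCM{n-1}{r}{s-2m+2}{0}$. After multiplying by $Q^{m-1}$ and relabeling the index in the raised term, I would collect the total coefficient of each basis element $Q^m\TenCM{n-1}{r}{s-2m}{0}$. Exactly three contributions arise: the direct $i=0$ term, the $Q$-part of the converted $i=1$ term at the same $m$, and the raised part of the converted $i=1$ term at index $m+1$. This produces
\[
\kappa_{n,k}^{s,m} = d_{n,k-1,k}^{s,0,0,m} + \frac{2\pi}{n-1}\,d_{n,k-1,k}^{s,0,1,m} - \frac{4\pi^2(s-2m)}{n-1}\,d_{n,k-1,k}^{s,0,1,m+1}.
\]

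The simplification then rests on the observation that, after setting $j=k-1$, the coefficients collapse to $d_{n,k-1,k}^{s,0,i,m} = \iota_{n,k}^{s,m}\,\binom{m}{i}\,\pi^{-i}$ with $\iota_{n,k}^{s,m}$ as in Corollary \ref{Cor_CF_j=k-1_l=1}; indeed $\Gamma(\frac{n-k+j+1}{2}) = \Gamma(\frac n2)$, $\Gamma(\frac{j+1}{2}) = \Gamma(\frac k2)$, and $\Gamma(\frac{n-k+j}{2}+1) = \Gamma(\frac{n+1}{2})$ cancels the denominator factor $\Gamma(\frac{n+1}{2})$. Substituting this and using the ratio $\iota_{n,k}^{s,m+1}/\iota_{n,k}^{s,m} = \tfrac{1}{4\pi(m+1)}\cdot\tfrac{\frac{n-k}{2}+m}{\frac{k+s-1}{2}-m}$, which follows from $\Gamma(z+1) = z\Gamma(z)$, reduces everything to a scalar multiple of $\iota_{n,k}^{s,m}$. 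The resulting numerator, of the shape $(n-1+2m)(\frac{k+s-1}{2}-m) - (s-2m)(\frac{n-k}{2}+m)$, is where the main computational work lies: writing $n-1+2m = 2(\frac{n-k}{2}+m)+k-1$ and $s-2m = 2(\frac{k+s-1}{2}-m)-k+1$ makes the quadratic cross terms cancel, leaving the transparent factor $(k-1)\cdot\frac{n+s-1}{2}$. A final use of $\Gamma(z+1)=z\Gamma(z)$ to shift the Gamma quotients then matches the stated closed form for $\kappa_{n,k}^{s,m}$.

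The one genuinely delicate point, and the source of the separate case $m = \frac{s-1}{2}$ in the statement, is the boundary term $m = \lfloor s/2\rfloor$. For this top index $m+1$ exceeds the range of the original sum, so the third contribution to $\kappa_{n,k}^{s,m}$ above is simply absent. When $s$ is even this causes no discrepancy, since the prefactor $(s-2m)$ vanishes at $m=s/2$ anyway; but when $s$ is odd one has $s-2m=1$ at $m=\frac{s-1}{2}$, so the missing term genuinely alters the coefficient. There only the first two contributions survive, giving $\kappa_{n,k}^{s,(s-1)/2} = \bigl(1 + \tfrac{s-1}{n-1}\bigr)\iota_{n,k}^{s,(s-1)/2}$, and evaluating $\iota_{n,k}^{s,(s-1)/2}$, where $\frac{k+s+1}{2}-\frac{s-1}{2} = \frac k2+1$ produces the extra factor $\frac k2$ via $\Gamma(\frac k2+1)/\Gamma(\frac k2)$, yields the special formula for $\kappa_{n,k}^{s,\frac{s-1}{2}}$.
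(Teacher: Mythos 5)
Your proposal is correct and follows essentially the same route as the paper's proof: specialize Theorem \ref{Thm_CF_j<k_l=0} to $j=k-1$, eliminate the $l=1$ measures via \eqref{FormTenCMRel}, collect the three contributions to each coefficient of $Q^m\TenCM{n-1}{r}{s-2m}{0}$, and simplify the resulting bracket $(n-1+2m)(\tfrac{k+s-1}{2}-m)-(s-2m)(\tfrac{n-k}{2}+m)=(k-1)\tfrac{n+s-1}{2}$, treating the top index $m=\lfloor s/2\rfloor$ separately, which for odd $s$ yields the exceptional coefficient $\kappa_{n,k}^{s,\frac{s-1}{2}}$. Phrasing the specialized coefficients as $d_{n,k-1,k}^{s,0,i,m}=\binom{m}{i}\pi^{-i}\iota_{n,k}^{s,m}$ is only a notational repackaging of the same computation the paper carries out directly with Gamma functions.
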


    Finally, we state the remaining case where $k = 1$ (see also  \cite[Theorem~4.13]{HugWeis16a}).

    \begin{corollary} \label{Cor_CF_k=1_l=0}
      Let $K \in \cK^{n}$, $\beta \in \cB(\R^{n})$, and $r, s \in \N_{0}$. Then
      \begin{align*}
        & \int_{\AOp(n, 1)} \TenCM{0}{r}{s}{0} (K \cap E, \beta \cap E) \, \mu_1( \intd E) \\
        & \qquad = \frac{\Gamma(\tfrac {s} 2 - \lfloor \frac s 2 \rfloor + 1)}{\sqrt\pi (4\pi)^{\lfloor \frac s 2 \rfloor} \lfloor \frac s 2 \rfloor!} \frac {\Gamma(\frac {n} 2) \Gamma(\tfrac {n + 1} 2 + \lfloor \frac s 2 \rfloor)} {\Gamma(\frac{n + 1}{2}) \Gamma(\frac {n + s + 1} 2)} \, Q^{\lfloor \frac s 2 \rfloor} \TenCM{n - 1}{r}{s - 2\lfloor \frac s 2 \rfloor}{0} (K, \beta).
      \end{align*}
    \end{corollary}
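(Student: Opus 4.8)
The plan is to obtain Corollary~\ref{Cor_CF_k=1_l=0} as the boundary case $j=0$, $k=1$ of Theorem~\ref{Thm_CF_j<k_l=0} (this case is exactly the one excluded from Corollary~\ref{Cor_CF_j=k-1_l=0}, where $k\ge 2$), followed by an elimination of the measures carrying upper index $l=1$ via the relation~\eqref{FormTenCMRel}. Since $n-k+j=n-1$, Theorem~\ref{Thm_CF_j<k_l=0} with $j=0$, $k=1$ reads
\begin{align*}
\int_{\AOp(n,1)}\TenCM{0}{r}{s}{0}(K\cap E,\beta\cap E)\,\mu_1(\intd E)=\sum_{m=0}^{\lfloor s/2\rfloor}\sum_{i=0}^{1}d_{n,0,1}^{s,0,i,m}\,Q^{m-i}\TenCM{n-1}{r}{s-2m}{i}(K,\beta),
\end{align*}
so the entire task is to show that this double sum collapses to the single claimed term with $m=\lfloor s/2\rfloor$.

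First I would simplify the coefficients. Inserting $j=0$, $k=1$ and using $\Gamma(\tfrac12)=\sqrt{\pi}$, $\Gamma(1)=1$ and the cancellation $\Gamma(\tfrac{n-1}{2}+1)=\Gamma(\tfrac{n+1}{2})$ (against the factor $\Gamma(\tfrac{n+1}{2})$ in the denominator), one finds the clean form
\begin{align*}
d_{n,0,1}^{s,0,i,m}=\frac{\binom{m}{i}}{\pi^{i}(4\pi)^{m}m!}\,\frac{\Gamma(\tfrac{n}{2})}{\sqrt{\pi}\,\Gamma(\tfrac{n+s+1}{2})}\,\Gamma(\tfrac{s}{2}-m+1)\,\frac{\Gamma(\tfrac{n-1}{2}+m)}{\Gamma(\tfrac{n-1}{2})}.
\end{align*}
Note that $\binom{m}{1}=m$ forces the $i=1$ coefficient to vanish at $m=0$, in accordance with the remark that $d_{n,j,k}^{s,0,1,0}=0$.

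Next I would substitute~\eqref{FormTenCMRel} into each $i=1$ summand, rewriting $\TenCM{n-1}{r}{s-2m}{1}$ as a combination of $Q\TenCM{n-1}{r}{s-2m}{0}$ and $\TenCM{n-1}{r}{s-2m+2}{0}$, and then collect the coefficient $C_p$ of $Q^{p}\TenCM{n-1}{r}{s-2p}{0}$. Three contributions arise: the $i=0$ term with $m=p$, the first summand of~\eqref{FormTenCMRel} with $m=p$, and its shift summand with $m=p+1$. Aligning the Gamma factors by the functional equation $\Gamma(z+1)=z\Gamma(z)$ (so that $(s-2p)\Gamma(\tfrac{s}{2}-p)=2\Gamma(\tfrac{s}{2}-p+1)$ and $\Gamma(\tfrac{n-1}{2}+p+1)=\tfrac{n-1+2p}{2}\Gamma(\tfrac{n-1}{2}+p)$), all three contributions acquire a common prefactor and reduce to the bracket
\begin{align*}
1+\frac{2p}{n-1}-\frac{n-1+2p}{n-1}=0,
\end{align*}
so that $C_p=0$ for every $0\le p\le\lfloor s/2\rfloor-1$. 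At the top index $p=\lfloor s/2\rfloor$ the shift summand (which would come from $m=p+1>\lfloor s/2\rfloor$) is absent; only the $i=0$ term and the first $i=1$ term survive, and using $\tfrac{n-1+2\ell}{n-1}\Gamma(\tfrac{n-1}{2}+\ell)=\tfrac{2}{n-1}\Gamma(\tfrac{n+1}{2}+\ell)$ with $\ell=\lfloor s/2\rfloor$, together with $\tfrac{2}{n-1}\Gamma(\tfrac{n-1}{2})^{-1}=\Gamma(\tfrac{n+1}{2})^{-1}$, collapses $C_\ell$ to exactly the coefficient stated in the corollary.

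The hard part will be the bookkeeping of the boundary indices in this regrouping, since it is precisely the asymmetry at the two ends that leaves a single surviving term here (rather than the full sum obtained for $j=k-1\ge 1$ in Corollary~\ref{Cor_CF_j=k-1_l=0}): the $i=1$ contribution is silently absent at $m=0$, while the shift term from~\eqref{FormTenCMRel} is absent at the top index. One should also check the two parities of $s$ uniformly, noting that for $\ell=\lfloor s/2\rfloor$ the surviving measure is $\TenCM{n-1}{r}{s-2\ell}{0}$ with $s-2\ell\in\{0,1\}$ and $\Gamma(\tfrac{s}{2}-\ell+1)\in\{1,\tfrac12\sqrt{\pi}\}$, so that no case distinction is required; and for $s\in\{0,1\}$ the sum reduces directly to its single term $m=0$, the degenerate reading of the Gamma quotients being governed by~\eqref{Form_Gam_Cont}.
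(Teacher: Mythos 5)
Your proposal is correct and follows essentially the same route as the paper: there, too, Corollary \ref{Cor_CF_k=1_l=0} is obtained from Theorem \ref{Thm_CF_j<k_l=0} with $j=k-1$, the $l=1$ measures are eliminated via \eqref{FormTenCMRel}, and the coefficients of $Q^{m}\TenCM{n-1}{r}{s-2m}{0}$ are collected, with only the top index $m=\lfloor s/2\rfloor$ surviving. The sole difference is that the paper runs this computation for general $k$ (proving Corollary \ref{Cor_CF_j=k-1_l=0} simultaneously), so the vanishing of the lower coefficients appears there through the factor $\frac{k-1}{n-1}$, which is zero at $k=1$ --- precisely your bracket identity $1+\frac{2p}{n-1}-\frac{n-1+2p}{n-1}=0$ in specialized form.
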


    Comparing Corollary \ref{Cor_CF_j=k-1_l=0} and Corollary \ref{Cor_CF_k=1_l=0} to the corresponding results in  \cite{HugWeis16a}, it should be observed that the normalization of the tensorial measures in \cite{HugWeis16a} is different from the current normalization (although the measures are denoted in the same way).

\section{The proofs of the main results}
\label{sec:4}

    In this section, we prove the Crofton formulae which have been stated in Section \ref{sec:3}.

\subsection{The kinematic formula for generalized tensorial curvature measures}

    The proof of the Crofton formulae uses the connection to the  corresponding (more general) kinematic formulae.
    For the classical scalar-valued curvature measures this connection is well known.
    For easier reference, we state the required kinematic formula, which has recently been  proved in \cite[Theorem 1]{HugWeis16b}.
    To state the result, we write $\GOp_n$ for the rigid motion group of $\R^n$ and denote by $\mu$ the Haar measure on $\GOp_n$ with the usual normalization (see \cite{HugWeis16a}, \cite[p.~586]{SchnWeil08}).

    \begin{theorem}[Kinematic formula \cite{HugWeis16b}] \label{Thm_KinemForm}
      For $P, P' \in \cP^n$, $\beta, \beta' \in \cB(\R^n)$, $j, l, r, s \in \N_{0}$ with $j \leq n$, and  $l=0$ if $j=0$,
      \begin{align*}
        & \int_{\GOp_n} \TenCM{j}{r}{s}{l} (P \cap g P', \beta \cap g \beta') \, \mu( \intd g) \nonumber \\
        & \qquad = \sum_{p = j}^{n} \sum_{m = 0}^{\lfloor \frac s 2 \rfloor} \sum_{i = 0}^{m} d_{n, j, n - p + j}^{s, l, i, m} \, Q^{m - i} \TenCM{p}{r}{s - 2m}{l + i} (P, \beta) \CM{n - p + j}(P', \beta'),
      \end{align*}
      where
      \begin{align*}
        d_{n, j, n - p + j}^{s, l, i, m} & = \frac{(-1)^{i}}{(4\pi)^{m} m!} \frac{\binom{m}{i}}{\pi^i} \frac {(i + l - 2)!} {(l - 2)!} \frac {\Gamma(\frac {n - p + j + 1} 2) \Gamma(\frac {p + 1} 2)} {\Gamma(\frac {n + 1} 2) \Gamma(\frac {j + 1} 2)} \\
        & \qquad \times \frac {\Gamma(\frac {p} 2 + 1)} {\Gamma(\frac {p + s} 2 + 1)} \frac {\Gamma(\frac {j + s} 2 - m + 1)} {\Gamma(\frac {j} 2 + 1)} \frac {\Gamma(\frac {p - j} 2 + m)} {\Gamma(\frac{p - j}{2})}.
      \end{align*}
    \end{theorem}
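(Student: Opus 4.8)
Since the statement is the principal kinematic formula recalled from \cite{HugWeis16b}, its proof in the present paper is a reference to that source; what follows is the direct route I would take to establish it. The plan is to compute the left-hand integral from the explicit polytopal representation of $\TenCM{j}{r}{s}{l}$, in the manner of a localized principal kinematic formula. First I would fix the combinatorial structure of the intersection: for $\mu$-almost all $g \in \GOp_n$ the polytopes $P$ and $gP'$ are in general position, so that the $j$-faces of $P \cap gP'$ are exactly the nonempty sets $F \cap gF'$ with $F \in \cF_p(P)$ and $F' \in \cF_{n-p+j}(P')$ for $p \in \{j,\ldots,n\}$, each of dimension $p + (n-p+j) - n = j$, and the normal cone splits as $N(P \cap gP', F \cap gF') = N(P,F) + g\,N(P',F')$ with the two summands spanning complementary subspaces. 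Substituting the definition and interchanging summation and integration reduces the claim to a double sum, over such pairs $(F,F')$, of a single integral over $\GOp_n$.

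Next I would decompose the Haar measure $\mu$ on $\GOp_n$ into its rotation and translation parts. The translation integral acts on the moment $x^r$ over $F \cap gF'$ and on the localizing sets $\beta$ and $g\beta'$; by the translative integral-geometric identity for tangential integrals over two affine flats in general position, it factorizes the tangential data, producing the localized moment $\int_{F\cap\beta} x^r\,\cH^j(\intd x)$ carried by $P$ and the localized face content $\cH^{n-p+j}(F'\cap\beta')$ carried by $P'$. The genuinely tensorial content then resides in the remaining average over $\rho \in \SO(n)$.

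That average is the core of the argument. One must evaluate, for rotating $N(P',F')$, the rotational mean of the tensor $\int_{(N(P,F)+\rho N(P',F'))\cap\Sn} u^s\,\cH^{n-j-1}(\intd u)$ multiplied by the metric tensor $Q(F\cap\rho F')^l$ of the intersection face. Since the two cones are complementary, a unit normal of their sum splits accordingly, and the rotationally invariant averages of tensor powers of unit vectors over spheres are proportional to powers of the metric tensor $Q$, the proportionality constants being ratios of the surface-area constants $\omega_\bullet$, i.e.\ of Gamma functions (odd powers averaging to zero, which forces the summation bound $\lfloor s/2\rfloor$). The averaging collapses the $N(P',F')$-dependence to its spherical content $\cH^{p-j-1}(N(P',F')\cap\Sn)$, which together with the face content from the translation step assembles the scalar factor $\CM{n-p+j}(P',\beta')$. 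Extracting $m$ metric-tensor factors from $u^s$ and allowing $i$ of them to be absorbed into the tangential tensor, thereby raising its exponent from $l$ to $l+i$ and leaving $Q^{m-i}$ explicit, produces exactly the double sum $\sum_m \sum_i$; collecting the normalizing constants $c_{n,\bullet}^{r,s,\bullet}$, the spherical-average constants, and the classical integral-geometric factor $\Gamma(\tfrac{n-p+j+1}2)\Gamma(\tfrac{p+1}2)/[\Gamma(\tfrac{n+1}2)\Gamma(\tfrac{j+1}2)]$ then yields the coefficient $d_{n,j,n-p+j}^{s,l,i,m}$.

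I expect the decisive obstacle to be precisely this tensor-algebraic bookkeeping: carrying out the combined rotational integral of $u^s$ against the tangential metric tensor $Q(F\cap\rho F')^l$ and correctly distributing the resulting metric-tensor contractions between the normal and tangential parts. This is what generates the binomial factor $\binom{m}{i}$, the sign $(-1)^i$, and the ratio $(i+l-2)!/(l-2)!$, and ensuring that these combinatorial factors — together with their Gamma-function continuations in the boundary cases $l\in\{0,1\}$ — emerge with the exact normalization stated is the delicate heart of the computation.
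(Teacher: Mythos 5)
The paper does not actually prove this theorem: it is imported from \cite[Theorem 1]{HugWeis16b}, and the only ``proof'' present here consists of the citation, a reindexing of the coefficients (the observation that $d_{n, j, n - p + j}^{s, l, i, m} = c_{n, j, p}^{s, l, i, m}$), and the boundary convention redefining $d_{n,j,j}^{s,l,i,m}$. So the relevant comparison is with the proof in that reference, whose overall architecture your sketch does reproduce correctly: almost-sure general position of $P$ and $gP'$, the identification of $\cF_j(P\cap gP')$ with the nonempty intersections $F\cap gF'$, the splitting $N(P\cap gP', F\cap gF') = N(P,F) + \rho N(P',F')$, the decomposition of $\mu$ into translative and rotational parts, and a final rotational average producing powers of $Q$.

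Nevertheless, as a proof your proposal has a genuine gap, and you name it yourself: every quantitative feature of the statement --- the sign $(-1)^i$, the factor $\binom{m}{i}/\pi^i$, the ratio $(i+l-2)!/(l-2)!$ with its Gamma-function interpretation at $l\in\{0,1\}$, and the five Gamma-quotients in $d_{n, j, n - p + j}^{s, l, i, m}$ --- is generated by the rotational integral that you describe but never evaluate. That integral is not a routine spherical average. First, the translative step does not factorize cleanly as you assert: it leaves behind a generalized-sine Jacobian of the pair $(F^0, \rho (F')^0)$ which couples the translative and rotational integrations. Second, the integral of $u^s$ over $\left(N(P,F)+\rho N(P',F')\right)\cap\Sn$ ranges over a spherical join of two cones, not over a full sphere, so the principle ``odd powers average to zero, even powers give multiples of $Q^{s/2}$'' does not apply as stated; one needs a join parametrization and then a genuine average over $\SO(n)$ of the resulting cone-dependent tensors. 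Third, the tensor $Q\bigl(F^0\cap\rho (F')^0\bigr)^l$ must be expanded and averaged jointly with all of the above, and it is exactly this joint average that produces the transfer of $i$ metric-tensor factors into the exponent $l+i$ with the alternating sign. In \cite{HugWeis16b} these steps constitute the bulk of the work and require auxiliary integral-geometric lemmas together with nontrivial simplifications of multiple sums of Gamma functions. Declaring this ``the delicate heart of the computation'' without performing it leaves a roadmap --- consistent in outline with the cited proof --- but not a proof.
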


    In the formulation of Theorem \ref{Thm_KinemForm}, we changed the order of the coefficients slightly as compared to the original work (see \cite[Theorem 1]{HugWeis16b}), as we have $d_{n, j, n - p + j}^{s, l, i, m} = c_{n, j, p}^{s, l, i, m}$. This is done in order to shorten the representation of the Crofton formulae. Furthermore, since $\TenCM{n}{r}{\tilde s}{l}$ vanishes for $\tilde s \neq 0$ and the functionals $Q^{\frac{s}{2} - i} \TenCM{n}{r}{0}{l + i}$, $i \in \{ 0, \ldots, \frac{s}{2} \}$, can be combined, we can redefine
    \begin{align*}
      d_{n, j, j}^{s, l, i, m} & := \1 \{ s \text{ even}, m = i = \tfrac{s}{2} \} \frac{1}{(2\pi)^{s}
      (\frac{s}{2})!} \frac{\Gamma(\frac{n - j + s}{2})}{\Gamma(\frac{n - j}{2})};
    \end{align*}
    for further details see the remark after Theorem 1 in \cite{HugWeis16b}.
    In particular, we interpret the ratio $(i+l-2)!/(l-2)!$  as a ratio of Gamma functions for $l\in\{0,1\}$ (see also the comments after Theorem \ref{Thm_CF_j<k}).

    \subsection{The proofs}

    We prove both, Theorem \ref{Thm_CF_j=k} and Theorem \ref{Thm_CF_j<k}, at once using the kinematic formula for generalized tensorial curvature measures deduced in \cite{HugWeis16b} and restated in the last section as Theorem \ref{Thm_KinemForm}.

    \begin{proof}[Proof of Theorem \ref{Thm_CF_j=k} and Theorem \ref{Thm_CF_j<k}]
      Let $P \in \cP^n$ and $\beta \in \cB(\R^n)$. First, we   prove the identity
      \begin{align} \label{Form_Croft_2}
        J := \int_{\AOp(n, k)} \TenCM{j}{r}{s}{l} (P \cap E, \beta) \, \mu_k( \intd E) & = \int_{\GOp_n} \TenCM{j}{r}{s}{l} (P \cap g E_k, \beta \cap g \alpha) \, \mu( \intd g)
      \end{align}
      for an arbitrary (but fixed) $E_k \in \GOp(n, k)$ and $\alpha \in \cB(E_k)$ with $\cH^k(\alpha) = 1$. This is shown as follows. Using \eqref{decomp}, we obtain
      \begin{align*}
        J = \int _{\SO(n)} \int_{E_k^\perp} \int_{\R^n} \1_\beta(x) \, \TenCM{j}{r}{s}{l} (P \cap \rho (E_k + t_1), \intd x) \, \cH^{n - k}( \intd t_1) \, \nu(\intd \rho).
      \end{align*}
      For $t_1 \in E_k^\perp$ and $x \in \rho(E_k + t_1)$ we have
      \begin{align*}
        x \in \rho(\alpha + t_1 + t_2) \Leftrightarrow t_2 \in -\alpha + \rho^{-1} x- t_1,
      \end{align*}
      for all $t_2 \in E_k$. Moreover, $-\alpha + \rho^{-1} x - t_1 \subset E_k$, since $\alpha \subset E_k$ and $x \in \rho(E_k + t_1)$ yields $\rho^{-1} x- t_1 \in E_k$. Thus, we get
      \begin{align*}
        \cH^k \left( \left\{ t_2 \in E_k: x \in \rho(\alpha + t_1 + t_2) \right\} \right) = \cH^k(-\alpha + \rho^{-1} x- t_1) = \cH^k(\alpha) = 1,
      \end{align*}
      and hence we have
      \begin{align*}
        J & = \int _{\SO(n)} \int_{E_k^\perp} \int_{\R^n}  \1_\beta(x)\int_{E_k} \1\{ x \in \rho (\alpha + t_1 + t_2)\} \, \cH^k(\intd t_2) \\
        & \qquad \times \TenCM{j}{r}{s}{l} (P \cap \rho (E_k + t_1), \intd x) \, \cH^{n - k}( \intd t_1) \, \nu(\intd \rho) \\
        & = \int _{\SO(n)} \int_{E_k^\perp} \int_{E_k} \int_{\R^n} \1_{\beta \cap \rho (\alpha + t_1 + t_2)}(x) \, \TenCM{j}{r}{s}{l} (P \cap \rho (E_k + t_1 + t_2), \intd x) \, \\
        & \qquad \times \cH^k(\intd t_2) \, \cH^{n - k}( \intd t_1) \, \nu(\intd \rho).
      \end{align*}
      Finally, Fubini's theorem yields
      \begin{align*}
        J & = \int _{\SO(n)} \int_{\R^n} \TenCM{j}{r}{s}{l}(P \cap \rho(E_k + t), \beta \cap \rho(\alpha + t)) \, \cH^{n}(\intd t) \, \nu(\intd \rho) \\
        & = \int_{\GOp_n} \TenCM{j}{r}{s}{l} (P \cap g E_k, \beta \cap g \alpha) \, \mu( \intd g),
      \end{align*}
      which concludes the proof of (\ref{Form_Croft_2}).

      Let $\alpha \in \cB(\R^{n})$ be compact with $\alpha \subset E_{k}$ and $\cH^k(\alpha) = 1$.
      Then choose $P' \in \cP^n$ with $P' \subset E_k$ and $\alpha \subset \relint P'$, such that the following holds, for all $g \in \GOp_{n}$:
      If $g^{-1} P \cap \alpha \neq \emptyset$, then $g^{-1}P \cap E_{k} \subset P'$.
      Hence, if $P \cap g \alpha \neq \emptyset$, then $P \cap g E_k = P \cap g P'$.
      Thus we obtain
      \begin{align*}
        J & = \int _{\GOp_n} \TenCM{j}{r}{s}{l} (P \cap g P', \beta \cap g \alpha) \, \mu(\intd g),
      \end{align*}
      and thus, by Theorem \ref{Thm_KinemForm}
      \begin{align*}
        J & = \sum_{p = j}^{n} \sum_{m = 0}^{\lfloor \frac s 2 \rfloor} \sum_{i = 0}^{m} d_{n, j, n - p + j}^{s, l, i, m} \, Q^{m - i} \TenCM{p}{r}{s - 2m}{l + i} (P, \beta) \CM{n - p + j}(P', \alpha).
      \end{align*}
      Hence, if $k = j$ we get
      \begin{align*}
        J & = \1 \{ s \text{ even} \} \frac{1}{(2\pi)^{s} \left(\frac{s}{2}\right)!} \frac{\Gamma(\frac{n - k + s}{2})}{\Gamma(\frac{n - k}{2})} \, \TenCM{n}{r}{0}{\frac s 2 + l}(P, \beta)  \underbrace{\CM{k}(P', \alpha)}_{ = \cH^k(\alpha) = 1} \\
        & = \1 \{ s \text{ even} \} \frac{1}{(2\pi)^{s} \frac{s}{2}!} \frac{\Gamma(\frac{n - k + s}{2})}{\Gamma(\frac{n - k}{2})} \, \TenCM{n}{r}{0}{\frac s 2 + l}(P, \beta),
      \end{align*}
      and  for $j<k$ we get
      \begin{align*}
        J & = \sum_{m = 0}^{\lfloor \frac s 2 \rfloor} \sum_{i = 0}^{m} d_{n, j, k}^{s, l, i, m} \, Q^{m - i} \TenCM{n - k + j}{r}{s - 2m}{l + i} (P, \beta) \underbrace{\CM{k}(P',\alpha)}_{ = \cH^k(\alpha) = 1 } \\
        & = \sum_{m = 0}^{\lfloor \frac s 2 \rfloor} \sum_{i = 0}^{m} d_{n, j, k}^{s, l, i, m} \, Q^{m - i} \TenCM{n - k + j}{r}{s - 2m}{l + i} (P, \beta),
      \end{align*}
      since $\CM{q}(P',\alpha) = 0$ for $q \neq k$.
    \end{proof}

    Next, we prove Corollary \ref{Cor_CF_j=k-1_l=1} and  Corollary \ref{Cor_CF_j=k-1_l=1Alt}, which are derived from Theorem \ref{Thm_CF_j<k_l=1}.
    The first follows immediately, whereas the second  subsequently is obtained by an application of \eqref{FormTenCMRel}.

    \begin{proof}[Proof of Corollary \ref{Cor_CF_j=k-1_l=1} and  Corollary \ref{Cor_CF_j=k-1_l=1Alt}]
      In both cases, we denote the integral we are interested in by $I$. First, we consider
			Corollary \ref{Cor_CF_j=k-1_l=1} and hence $l=1$. In this case,
       Theorem \ref{Thm_CF_j<k_l=1} yields
      \begin{align*}
        I = \sum_{m = 0}^{\lfloor \frac s 2 \rfloor} \iota_{n, k}^{s, m} \, Q^{m} \TenCM{n - 1}{r}{s - 2m}{1} (K, \beta),
      \end{align*}
      where
      \begin{align*}
        \iota_{n, k}^{s, m} & : = d_{n, k - 1, k}^{s, 1, 0, m} = \frac{1}{(4\pi)^{m} m!} \frac {\Gamma(\frac {n} 2) \Gamma(\frac {k + s + 1} 2 - m) \Gamma(\frac {n - k} 2 + m)} {\Gamma(\frac {n + s + 1} 2) \Gamma(\frac {k} 2) \Gamma(\frac{n - k}{2})},
      \end{align*}
      which already proves Corollary \ref{Cor_CF_j=k-1_l=1}.
			
      Next, we turn to the proof of Corollary \ref{Cor_CF_j=k-1_l=1Alt}. From
			Corollary \ref{Cor_CF_j=k-1_l=1} and \eqref{FormTenCMRel},
			we conclude that
      \begin{align*}
        I & = \frac{2\pi}{n - 1} \sum_{m = 0}^{\lfloor \frac s 2 \rfloor} \iota_{n, k}^{s, m} Q^{m + 1} \TenCM{n - 1}{r}{s - 2m}{0} (K, \beta) - 2\pi(s - 2m + 2) \iota_{n, k}^{s, m} \, Q^{m} \TenCM{n - 1}{r}{s - 2m + 2}{0} (K, \beta) \\
        & = \frac{2\pi}{n - 1} \sum_{m = 1}^{\lfloor \frac s 2 \rfloor + 1} \iota_{n, k}^{s, m - 1} \, Q^{m} \TenCM{n - 1}{r}{s - 2m + 2}{0} (K, \beta) \\
        & \qquad - \frac{2\pi}{n - 1} \sum_{m = 0}^{\lfloor \frac s 2 \rfloor} 2\pi(s - 2m + 2) \iota_{n, k}^{s, m} \, Q^{m} \TenCM{n - 1}{r}{s - 2m + 2}{0} (K, \beta) \\
        & = \sum_{m = 1}^{\lfloor \frac s 2 \rfloor} \frac{2\pi}{n - 1} \left( \iota_{n, k}^{s, m - 1} - 2\pi(s - 2m + 2) \iota_{n, k}^{s, m} \right) \, Q^{m} \TenCM{n - 1}{r}{s - 2m + 2}{0} (K, \beta) \\
        & \qquad + \frac{2\pi}{n - 1} \iota_{n, k}^{s, \lfloor \frac s 2 \rfloor} \, Q^{\lfloor \frac s 2 \rfloor + 1} \TenCM{n - 1}{r}{s - 2\lfloor \frac s 2 \rfloor}{0} (K, \beta) - \frac{4\pi^{2}(s + 2)}{n - 1} \iota_{n, k}^{s, 0} \, \TenCM{n - 1}{r}{s + 2}{0} (K, \beta).
      \end{align*}
      Denoting the coefficients by $\lambda_{n, k}^{s, m}$, we obtain for $m \in \{ 1, \ldots, \lfloor \frac s 2 \rfloor \}$
      \begin{align*}
        \lambda_{n, k}^{s, m} & = \frac{\pi}{(n - 1)(4\pi)^{m - 1} m!} \frac {\Gamma(\frac {n} 2)\Gamma(\frac {k + s + 1} 2 - m) \Gamma(\tfrac {n - k} 2 + m - 1)} {\Gamma(\frac {n + s + 1} 2) \Gamma(\frac {k} 2) \Gamma(\frac{n - k}{2})} \\
        & \qquad \times \left( 2 m (\tfrac {k + s + 1} 2 - m) - (s - 2m + 2) (\tfrac {n - k} 2 + m - 1) \right),
      \end{align*}
      and
      \begin{align*}
        \lambda_{n, k}^{s, 0} & = - \frac{4\pi^{2}(s + 2)}{n - 1} \iota_{n, k}^{s, 0} = - \frac{4\pi^{2}(s + 2)}{n - 1} \frac {\Gamma(\frac {n} 2) \Gamma(\frac {k + s + 1} 2)} {\Gamma(\frac {n + s + 1} 2) \Gamma(\frac {k} 2)}, \\
        \lambda_{n, k}^{s, \lfloor \frac s 2 \rfloor + 1}& = \frac{2\pi}{n - 1} \iota_{n, k}^{s, \lfloor \frac s 2 \rfloor} \\
        & = \frac{2 \pi}{(n - 1)(4\pi)^{\lfloor \frac s 2 \rfloor} (\lfloor \frac s 2 \rfloor)!} \frac {\Gamma(\frac {n} 2) \Gamma(\frac {k + s + 1} 2 - \lfloor \frac s 2 \rfloor) \Gamma(\frac {n - k} 2 + \lfloor \frac s 2 \rfloor)} {\Gamma(\frac {n + s + 1} 2) \Gamma(\frac {k} 2) \Gamma(\frac{n - k}{2})},
      \end{align*}
      where $\lambda_{n, k}^{s, 0}$ is defined according to the general definition, but $\lambda_{n, k}^{s, \lfloor \frac s 2 \rfloor + 1}$ differs slightly for odd~$s$.
    \end{proof}

    Finally, we prove Corollary \ref{Cor_CF_j=k-1_l=0} and Corollary \ref{Cor_CF_k=1_l=0}, which are derived from Theorem~\ref{Thm_CF_j<k_l=0}.

    \begin{proof}[Proof of Corollary \ref{Cor_CF_j=k-1_l=0} and Corollary \ref{Cor_CF_k=1_l=0}]
        We denote the integral we are interested in by $I$ and establish both corollaries simultaneously.
				Theorem \ref{Thm_CF_j<k_l=0} yields
        \begin{align*}
          I = \sum_{m = 0}^{\lfloor \frac s 2 \rfloor} d_{n, k - 1, k}^{s, 0, 0, m} \, Q^{m} \TenCM{n - 1}{r}{s - 2m}{0} (K, \beta) + \sum_{m = 1}^{\lfloor \frac s 2 \rfloor} d_{n, k - 1, k}^{s, 0, 1, m} \, Q^{m - 1} \TenCM{n - 1}{r}{s - 2m}{1} (K, \beta),
        \end{align*}
        where
        \begin{align*}
          d_{n, k - 1, k}^{s, 0, i, m} & : = \frac{1}{4^{m} (m - i)!} \frac{1}{\pi^{i + m}} \frac {\Gamma(\frac {n} 2)} {\Gamma(\frac {n + s + 1} 2) \Gamma(\frac {k} 2) \Gamma(\frac{n - k}{2})}  \Gamma(\tfrac {k + s + 1} 2 - m) \Gamma(\tfrac {n - k} 2 + m).
        \end{align*}
        From \eqref{FormTenCMRel} we obtain that
        \begin{align*}
          I & = \sum_{m = 0}^{\lfloor \frac s 2 \rfloor} d_{n, k - 1, k}^{s, 0, 0, m} \, Q^{m} \TenCM{n - 1}{r}{s - 2m}{0} (K, \beta) + \frac{2\pi}{n - 1} \sum_{m = 1}^{\lfloor \frac s 2 \rfloor} d_{n, k - 1, k}^{s, 0, 1, m} \, Q^{m} \TenCM{n - 1}{r}{s - 2m}{0} (K, \beta) \\
          & \qquad - \frac{4\pi^{2}}{n - 1} \sum_{m = 1}^{\lfloor \frac s 2 \rfloor} d_{n, k - 1, k}^{s, 0, 1, m} (s - 2m + 2) \, Q^{m - 1} \TenCM{n - 1}{r}{s - 2m + 2}{0} (K, \beta),
        \end{align*}
       where we used that $d_{n, k - 1, k}^{s, 0, 1, 0} = 0$. This can be rewritten in the form
							\begin{align*}
          I & = \sum_{m = 0}^{\lfloor \frac s 2 \rfloor} \left( d_{n, k - 1, k}^{s, 0, 0, m} + \frac{2\pi}{n - 1} d_{n, k - 1, k}^{s, 0, 1, m} \right) Q^{m} \TenCM{n - 1}{r}{s - 2m}{0} (K, \beta) \\
          & \qquad - \frac{4\pi^{2}}{n - 1} \sum_{m = 0}^{\lfloor \frac s 2 \rfloor - 1} d_{n, k - 1, k}^{s, 0, 1, m + 1} (s - 2m) \, Q^{m} \TenCM{n - 1}{r}{s - 2m}{0} (K, \beta) \\
          & = \sum_{m = 0}^{\lfloor \frac s 2 \rfloor - 1} \left( d_{n, k - 1, k}^{s, 0, 0, m} + \frac{2\pi}{n - 1} d_{n, k - 1, k}^{s, 0, 1, m} - \frac{4\pi^{2} (s - 2m)}{n - 1} d_{n, k - 1, k}^{s, 0, 1, m + 1} \right) Q^{m} \TenCM{n - 1}{r}{s - 2m}{0} (K, \beta) \\
          & \qquad + \left( d_{n, k - 1, k}^{s, 0, 0, \lfloor \frac s 2 \rfloor} + \frac{2\pi}{n - 1} d_{n, k - 1, k}^{s, 0, 1, \lfloor \frac s 2 \rfloor} \right) Q^{\lfloor \frac s 2 \rfloor} \TenCM{n - 1}{r}{s - 2\lfloor \frac s 2 \rfloor}{0} (K, \beta).
        \end{align*}
        Denoting the corresponding coefficients of the summand
        $Q^{m} \TenCM{n - 1}{r}{s - 2m}{0} (K, \beta)$ by $\kappa_{n, k}^{s, m}$, we obtain
        \begin{align}
          \kappa_{n, k}^{s, m} & = \left( 1 + \frac{2m}{n - 1} \right) \frac{1}{(4\pi)^{m} m!} \frac {\Gamma(\frac {n} 2) \Gamma(\tfrac {k + s + 1} 2 - m) \Gamma(\tfrac {n - k} 2 + m)} {\Gamma(\frac {n + s + 1} 2) \Gamma(\frac {k} 2) \Gamma(\frac{n - k}{2})} \nonumber \\
          & \qquad - \frac{s - 2m}{n - 1} \frac{1}{(4\pi)^{m} m!} \frac {\Gamma(\frac {n} 2) \Gamma(\tfrac {k + s - 1} 2 - m) \Gamma(\tfrac {n - k} 2 + m + 1)} {\Gamma(\frac {n + s + 1} 2) \Gamma(\frac {k} 2) \Gamma(\frac{n - k}{2})} \nonumber \allowdisplaybreaks\\
          & = \frac{1}{(4\pi)^{m} m!} \frac {\Gamma(\frac {n} 2) \Gamma(\tfrac {k + s - 1} 2 - m) \Gamma(\tfrac {n - k} 2 + m)} {\Gamma(\frac {n + s + 1} 2) \Gamma(\frac {k} 2) \Gamma(\frac{n - k}{2})} \nonumber \\
          & \qquad \times \underbrace{\left( \tfrac{n + 2m - 1}{n - 1} (\tfrac {k + s - 1} 2 - m) - \tfrac{s - 2m}{n - 1} (\tfrac {n - k} 2 + m) \right)}_{ = \frac{k - 1}{n - 1} \frac{n + s - 1}{2} } \nonumber \\
          & = \frac{k - 1}{n - 1} \frac{1}{(4\pi)^{m} m!} \frac {\Gamma(\frac {n} 2) \Gamma(\tfrac {k + s - 1} 2 - m) \Gamma(\tfrac {n - k} 2 + m)} {\Gamma(\frac {n + s - 1} 2) \Gamma(\frac {k} 2) \Gamma(\frac{n - k}{2})}, \label{Form_kappa}
        \end{align}
        for $m \in \{ 0, \ldots, \lfloor \frac s 2 \rfloor - 1 \}$. For $k = 1$, we immediately get $\kappa_{n, 1}^{s, m} = 0$ in these cases. Furthermore, we have
        \begin{align*}
          \kappa_{n, k}^{s, \lfloor \frac s 2 \rfloor} & = \left( 1 + \frac{2\lfloor \frac s 2 \rfloor}{n - 1} \right) \frac{1}{(4\pi)^{\lfloor \frac s 2 \rfloor} \lfloor \frac s 2 \rfloor!} \frac {\Gamma(\frac {n} 2) \Gamma(\tfrac {k + s + 1} 2 - \lfloor \frac s 2 \rfloor) \Gamma(\tfrac {n - k} 2 + \lfloor \frac s 2 \rfloor)} {\Gamma(\frac {n + s + 1} 2) \Gamma(\frac {k} 2) \Gamma(\frac{n - k}{2})}.
        \end{align*}
        If $s$ is even and $k > 1$, this coincides with \eqref{Form_kappa} for $m = \frac{s}{2}$. If $s$ is odd, we have
        \begin{align*}
          \kappa_{n, k}^{s, \frac {s - 1} 2} & = \frac{k (n + s - 2)}{2(n - 1)} \frac{1}{(4\pi)^{\frac {s - 1} 2} \frac {s - 1} 2!} \frac {\Gamma(\frac {n} 2) \Gamma(\tfrac {n - k + s - 1} 2)} {\Gamma(\frac {n + s + 1} 2) \Gamma(\frac{n - k}{2})}
        \end{align*}
        and thus the assertion of Corollary \ref{Cor_CF_j=k-1_l=0}. For $k = 1$, we obtain
        \begin{align*}
          \kappa_{n, 1}^{s, \lfloor \frac s 2 \rfloor} & = \frac{\Gamma(\tfrac {s} 2 - \lfloor \frac s 2 \rfloor + 1)}{\sqrt\pi (4\pi)^{\lfloor \frac s 2 \rfloor} \lfloor \frac s 2 \rfloor!} \frac {\Gamma(\frac {n} 2) \Gamma(\tfrac {n + 1} 2 + \lfloor \frac s 2 \rfloor)} {\Gamma(\frac{n + 1}{2}) \Gamma(\frac {n + s + 1} 2)}
        \end{align*}
        and thus the assertion of Corollary \ref{Cor_CF_k=1_l=0}.
    \end{proof}

\section{Linear independence of the generalized tensorial curvature measures}
\label{sec:5}

In this section, we show that the generalized tensorial curvature measures multiplied with powers of the metric tensor are linearly independent.
The proof of this result follows the argument for Theorem 3.1 in \cite{HugSchn14}.

\begin{theorem}\label{linindep}
	For $p \in \N_0$, the tensorial measure valued valuations $$Q^m \LokMinTen{j}{r}{s}{l}: \cP^{n} \times \cB(\R^{n}) \rightarrow \T^{p}$$ with $ m, r, s, l \in \N_0$ and
	$j \in \{ 0, \ldots, n  \}$, where $2m + 2l + r + s = p$,   but with $l = 0$ if $j \in \{ 0, n - 1 \}$ and with $s=l=0$ if $j=n$, are linearly independent.
\end{theorem}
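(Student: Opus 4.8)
The plan is to assume that a linear combination
\[
  \sum_{j,r,s,l,m} a_{j,r,s,l,m}\, Q^{m}\LokMinTen{j}{r}{s}{l}(P,\beta)=0
\]
holds for every $P\in\cP^{n}$ and every $\beta\in\cB(\R^{n})$, the sum ranging over the admissible index set ($2m+2l+r+s=p$, with $l=0$ if $j\in\{0,n-1\}$ and $s=l=0$ if $j=n$), and to deduce that all coefficients vanish. The structural input I would exploit is that $\LokMinTen{j}{r}{s}{l}(P,\cdot)$ is carried by the $j$-skeleton of $P$ and that, on a single $j$-face $F$ with directional space $L$ (the linear subspace parallel to $\aff F$) and normal cone $N:=N(P,F)\subset L^{\perp}$, it factorizes into a \emph{tangential} moment $\int_{F\cap\beta}x^{r}\,\cH^{j}(\intd x)$ (a symmetric tensor over $L$), a \emph{normal} moment $\Psi_{s}(N):=\int_{N\cap\Sn}u^{s}\,\cH^{n-j-1}(\intd u)$ (a symmetric tensor over $L^{\perp}$), the tensor $Q(F)^{l}=Q(L)^{l}$, and the ambient power $Q^{m}$. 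The orthogonality $L\perp L^{\perp}$ will be the whole engine of the separation.

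First I would separate the face dimension $j$: taking a polytope and a Borel set $\beta$ in the relative interior of one fixed $j$-face $F_{0}$ annihilates every summand with face dimension $\neq j$ (lower-dimensional faces miss $\relint F_{0}$, higher-dimensional ones meet it in an $\cH^{j}$-null set), leaving only $F_{0}$. Next I would separate the tangential index $r$: letting $\beta$ shrink to a point $y$ and dividing by $\cH^{j}(\beta)$ replaces the tangential moment by $y^{r}$, and after positioning $F_{0}$ so that $0\in\relint F_{0}$ the homogeneity $y\mapsto ty$ turns the relation into a polynomial identity in $t$ whose coefficients must vanish separately. This isolates, for each fixed $r$, a relation of the form
\[
  \Big(\sum_{s,l,m} b_{s,l,m}\,Q^{m}Q(L)^{l}\,\Psi_{s}(N)\Big)\, y^{r}=0
  \qquad\text{for all }y\in L,
\]
with $b_{s,l,m}$ proportional to $a_{j,r,s,l,m}$ and $2m+2l+s=p-r$ fixed; the factor $y^{r}$ merely rides along as a tangential tensor to be divided out later.

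The heart of the argument is to disentangle $s,l,m$. Here I would use the freedom to pick $L$ an arbitrary $j$-subspace and to let $N$ be a thin cone around a prescribed unit normal $v\in L^{\perp}$; normalizing by $\cH^{n-j-1}(N\cap\Sn)$ and passing to the limit replaces $\Psi_{s}(N)$ by the pure power $v^{s}$. Writing $Q=Q(L)+Q(L^{\perp})$ and expanding $Q^{m}$, I would decompose the resulting identity according to the splitting of the symmetric algebra induced by $\R^{n}=L\oplus L^{\perp}$. In a fixed bidegree the tangential factor is forced to be a single nonzero tensor $Q(L)^{\mu}y^{r}$ (a power of $Q(L)$ times $y^{r}$), which may be divided out, leaving a purely normal identity $\sum_{s}c_{s}\,Q(L^{\perp})^{b}v^{s}=0$ for all unit $v\in L^{\perp}$. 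When $\dim L^{\perp}=n-j\ge 2$, the spherical harmonic decomposition on $L^{\perp}$ separates the summands by $s$ (their top harmonic degree is $s$), and what remains, for each $s$, is a linear system in the $b_{s,l,m}$ whose matrix consists of the binomial coefficients $\binom{m}{\mu-l}$ arising from expanding $Q^{m}$; this matrix is triangular with unit diagonal, so together with the degree constraint $m+l=\tfrac12(p-r-s)$ it forces every $b_{s,l,m}=0$, hence every $a_{j,r,s,l,m}=0$.

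The main obstacle is precisely this last separation, because the ambient metric $Q=Q(L)+Q(L^{\perp})$ couples the tangential powers (governed by $l$ and part of $m$) with the normal powers (governed by $s$ and the rest of $m$): these are the McMullen-type metric relations, which become genuinely degenerate exactly in the excluded ranges. This is why the boundary cases must be treated by hand and dictate the hypotheses of the theorem. For $j=n$ one has $L^{\perp}=\{0\}$ and $Q(F)=Q$, forcing $s=l=0$; for $j=0$ one has $L=\{0\}$ and $Q(F)=0$, so $\LokMinTen{0}{r}{s}{l}\equiv 0$ for $l\ge 1$ and $l=0$ is forced; and for $j=n-1$ the normal space is one-dimensional, so $v^{2}=Q(L^{\perp})$ makes the harmonic separation collapse and again forces $l=0$. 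Away from these cases, that is for $1\le j\le n-2$ with $l$ arbitrary, the harmonic argument applies verbatim and the triangular system closes, which completes the proof.
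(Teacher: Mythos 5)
Your overall strategy coincides with the paper's: localize to a single $j$-face to separate $j$, use homogeneity to separate $r$, divide out the tangential moment (no zero divisors in the symmetric tensor algebra), replace the normal moment by a pure power $u_0^s$ via thin polyhedral cones around a prescribed normal direction and a limit, and then disentangle $s,l,m$ algebraically. Your final step (bidegree splitting via $Q=Q(L)+Q(L^{\perp})$, harmonic separation in $s$, and the triangular system of binomial coefficients $\binom{m}{\mu-l}$ with unit diagonal) is correct for $1\le j\le n-2$ and is in fact more explicit than the paper, which at this point only refers to the proof of Theorem 3.1 in Hug--Schneider.

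There are, however, two genuine gaps, both in the boundary cases, which the theorem does assert and which your proof does not close. First, your device for separating $r$ --- positioning $F_0$ so that $0\in\relint F_0$ and scaling $y\mapsto ty$ --- fails precisely for $j=0$: then $F_0$ is a vertex forced to sit at the origin, so $y^r=0$ for every $r\ge 1$, the relation $\bigl(\sum_{s,m} b_{s,m}\,Q^m\Psi_s(N)\bigr)\,y^r=0$ becomes vacuous, and the coefficients $a_{0,m,r,s,0}$ with $r\ge 1$ are never shown to vanish. The paper avoids this by scaling the pair $(P,\beta)$ and comparing degrees of homogeneity $j+r$, after translating so that the moment $\int_\beta x^r\,\cH^j(\intd x)\neq 0$; that works for every $j$, including $j=0$, since the vertex need not be at the origin. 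Second, your closing paragraph only explains why the hypotheses impose $l=0$ for $j\in\{0,n-1\}$ and $s=l=0$ for $j=n$; it does not prove that the coefficients of the remaining admissible functionals with $j\in\{0,n-1,n\}$ vanish, although these belong to the family whose independence is claimed (your ``which completes the proof'' covers only $1\le j\le n-2$). These cases are not difficult, but they must be run: for $j=n$, after separating $r$, a single term $a\,Q^m$ survives, whence $a=0$; for $j=n-1$, $l=0$, writing $Q(L^{\perp})=u_0^2$ the bidegree expansion yields $\sum_m \binom{m}{\mu}\,a_{m,\,p-r-2m}=0$ for every $\mu$, again a triangular system with unit diagonal, so no harmonics are needed despite the collapse you point out; and for $j=0$, $l=0$ (after repairing the $r$-separation as above) the harmonic separation on $L^{\perp}=\R^n$ applies verbatim. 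As written, these steps are missing.
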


In particular, Theorem \ref{linindep} shows that the Crofton formulae, which we stated in Section \ref{sec:3} and proved in Section \ref{sec:4}, cannot be simplified further, as there are no more linear dependences between the appearing functionals. A corresponding statement holds for the results from our preceding work~\cite{HugWeis16b}.

\begin{proof}
	Suppose that
	\begin{align} \label{Form_Indep_1}
		\sum_{\substack{j,  m, r, s, l  \\ 2m + 2l + r + s = p}} a_{j, m, r, s, l}^{(0)} Q^m \LokMinTen{j}{r}{s}{l} = 0
	\end{align}
	holds for some $a_{j,  m, r, s, l}^{(0)} \in \R$, where $a_{0,  m, r, s, l}^{(0)} = a_{n - 1,  m, r, s, l}^{(0)}  = 0$ if $l \neq 0$ and $a_{n , m, r, s, l}^{(0)}=0$ if $s\neq 0$ or $l\neq 0$. In the proof we will
	replace the constants $a_{j, m, r, s, l}^{(0)}$ by new constants $a_{j, m, r, s, l}^{( 1)}$ without keeping track of the precise relations, since it will be sufficient to know that
	$a_{j, m, r, s, l}^{(0)} = 0$ if and only if $a_{j, m, r, s, l}^{(1)} = 0$.

	For a fixed $j \in \{ 0, \ldots, n \}$, let $P \in \cP^n$ with $\interior P \neq \emptyset$, $F \in \cF_j(P)$,
	and $\beta \in \cB(\relint F)$. Then, if $j < n$ we obtain for the generalized tensorial curvature measures
	\begin{align*}
		\LokMinTen{j}{r}{s}{l}(P, \beta) & = c_{n, j, r, s,l} \sum_{G \in \cF_j(P)} Q(G)^l \int_{G \cap \beta} x^r \, \cH^j(\intd x) \int_{N(P, G) \cap \Sn} u^s \, \cH^{n - j - 1} (\intd u) \\
		& = c_{n, j, r, s,l} Q(F)^l \int_{\beta} x^r \, \cH^j(\intd x) \int_{N(P,F) \cap \Sn} u^s \, \cH^{n - j - 1}
		(\intd u),
	\end{align*}
	where $c_{n, j, r, s,l}>0$ is a constant, and $\LokMinTen{k}{r}{s}{l}(P, \beta) = 0$ for $k \neq j$. Moreover, we
	have
	$$
	\LokMinTen{n}{r}{0}{0}(P, \beta) =\frac{1}{r!}\int_\beta x^r\, \mathcal{H}^n(\intd x).
	$$
	Hence, from (\ref{Form_Indep_1}) it follows that
	\begin{align*}
		\sum_{\substack{ m, r, s, l  \\ 2m + 2l + r + s = p}} a_{j,  m, r, s, l}^{(1)} Q^m Q(F)^l \int_{\beta} x^r \, \cH^j(\intd x) \int_{N(P,F) \cap \Sn} u^s \, \cH^{n - j - 1} (\intd u) = 0,
	\end{align*}
	where for $j=n$ the spherical integral is omitted (also in the following).
	
	We may assume that $\int_\beta x^r \,\cH^j(\intd x) \neq 0$ (otherwise, we consider a translate of $P$ and $\beta$). If we repeat the above calculations with multiples of $P$ and $\beta$, a comparison of the degrees of homogeneity yields,  for every $r \in \N_0$, that
	\begin{align*}
		\sum_{\substack{ m, s, l  \\ 2m + 2l + s = p - r}} a_{j,  m, r, s, l}^{(1)} Q^m Q(F)^l \int_{\beta} x^r \,
		\cH^j(\intd x) \int_{N(P,F) \cap \Sn} u^s \, \cH^{n - j - 1} (\intd u) = 0.
	\end{align*}
	Hence, due to the lack of zero divisors in the tensor algebra $\T$, we obtain
	\begin{align} \label{Form_Indep_2}
		\sum_{\substack{ m, s, l  \\ 2m + 2l + s = p - r}} a_{j,  m, r, s, l}^{(1)} Q^m Q(F)^l \int_{N(P,F) \cap \Sn} u^s \, \cH^{n - j - 1} (\intd u) = 0.
	\end{align}
This shows that, in the case of $j=n$ (where the spherical integral is omitted), we have $a_{n,  m, r, s, l}^{(1)}=0$ also for $s=l=0$.
Hence, in the following we may assume that $j<n$.

Let $L\in \GOp(n, j)$, $j<n$, and $u_0\in L^\perp\cap\mathbb{S}^{n-1}$. For $j\le n-2$, let
$u_0,u_1,\ldots,u_{n-j-1}$ be an orthonormal
basis of $L^\perp$. In this case, we define the (pointed) polyhedral cone
$C(u_0,\tau):=\text{pos}\{u_0\pm \tau\, u_1,\ldots,u_0\pm \tau\, u_{n-j-1}\}\subset L^\perp$ for $\tau\in (0,1)$.
Then, for any $v\in C(u_0,\tau)\cap \Sn$, we have $\langle v, u_0\rangle\ge 1/\sqrt{1+\tau^2}$, and therefore $\|u_0-v\|\le \sqrt{2}\tau$.
In fact,  any $v \in C(u_0,\tau)\cap \Sn$ can be written as $v = \frac{x}{\| x \| }$, where $x \in \conv\{ v_{1}^{\pm}, \ldots, v_{n - j - 1}^{\pm} \}$ with $v_{i}^{\pm} = \frac{u_0\pm \tau u_i}{\| u_0\pm \tau u_i \|} = \frac{u_0\pm \tau u_i}{\sqrt{1 + \tau^2}} \in \Sn$, $i \in \{ 1, \ldots, n - j - 1 \}$. Thus we have $x = \sum \lambda_{i}^{\epsilon} v_{i}^{\epsilon}$, where we sum
 over all $i \in \{ 1, \ldots, n - j - 1 \}$ and all $\epsilon \in \{+, -\}$, with $\sum \lambda_{i}^{\epsilon} = 1$ and $\lambda_{i}^{\epsilon} \ge 0$, $i \in \{ 1, \ldots, n - j - 1 \}$, $\epsilon \in \{+, -\}$.
This yields
\begin{align*}
  \langle v, u_{0} \rangle
  = \frac{1}{\| x \|} \langle \sum \lambda_{i}^{\epsilon} v_{i}^{\epsilon}, u_{0} \rangle
  = \frac{1}{ \sqrt{1 + \tau^2} \| x \|} \sum \lambda_{i}^{\epsilon}
  \geq \frac{1}{\sqrt{1 + \tau^2}},
\end{align*}
as $\| x \| \leq \sum \lambda_{i}^{\epsilon} \| v_{i}^{\epsilon} \| = 1$. This proves the assertion.
For $j=n-1$ we simply put $C(u_0,\tau):=\text{pos}\{u_0\}$.

Let $C(u_0,\tau)^\circ$ denote the polar cone of $C(u_0,\tau)$. Then $P:=C(u_0,\tau)^\circ\cap [-1,1]^n\in\mathcal{P}^n$
and $F:=L\cap[-1,1]^n\in\mathcal{F}_j(P)$ satisfy $N(P,F)=N(P,0)=C(u_0,\tau)$. With these choices, \eqref{Form_Indep_2} turns into
\begin{equation}\label{new2}
\sum_{\substack{ m, s, l  \\ 2m + 2l + s = p - r}} a_{j,  m, r, s, l}^{(1)}
Q^m Q(L)^l \int_{C(u_0,\tau) \cap \Sn} u^s \, \cH^{n - j - 1} (\intd u) = 0.
\end{equation}
Dividing \eqref{new2} by $\cH^{n - j - 1}(C(u_0,\tau) \cap \Sn)$ and passing to the limit as $\tau\to 0$, we get
\begin{align*} 
		\sum_{\substack{ m, s, l  \\ 2m + 2l + s = p - r}} a_{j,   m, r, s, l}^{(1)} Q^m Q(L)^l u_0^s = 0
	\end{align*}
	for any $u_0 \in L^\perp\cap \Sn$. Here we use that
\begin{align*}
&\left|\cH^{n - j - 1}(C(u_0,\tau) \cap \Sn)^{-1}\int_{C(u_0,\tau) \cap \Sn} u^s \, \cH^{n - j - 1} (\intd u)
-u_0^s\right|\\
&\qquad\le \max\{|u^s-u_0^s|:u\in C(u_0,\tau) \cap \Sn\}\to 0
\end{align*}
 as $\tau\to 0$.
	
	The rest of the proof follows similarly as in the proof of \cite[Theorem 3.1]{HugSchn14}.
\end{proof}

\end{document}